\documentclass[onefignum,onetabnum]{siamart220329}



\usepackage{lipsum}
\usepackage{amsfonts}
\usepackage{tikz-cd}  
\usepackage{amsmath,booktabs,ctable,threeparttable}
\usepackage{amssymb,amsfonts,boxedminipage}
\usepackage{graphicx,epstopdf} 
\usepackage{cite}
\usepackage{algorithm,algpseudocode}
\usepackage{arydshln}
\usepackage{lineno}
\usepackage[caption=false]{subfig} 
\graphicspath{{fig/}{fig/dynamics/}}

\ifpdf
  \DeclareGraphicsExtensions{.eps,.pdf,.png,.jpg}
\else
  \DeclareGraphicsExtensions{.eps}
\fi


\algrenewcommand\algorithmicrequire{\textbf{Input:}}
\algrenewcommand\algorithmicensure{\textbf{Output:}}

\newsiamremark{remark}{Remark}
\newsiamremark{hypothesis}{Hypothesis}
\crefname{hypothesis}{Hypothesis}{Hypotheses}
\newsiamthm{claim}{Claim}

\headers{Derivative estimation by RKHS}{H. Guo and H. Li}

\title{Derivative estimation by RKHS regularization for learning dynamics from time-series data }

\author{Hailong Guo\thanks{School of Mathematics and Statistics, The University of Melbourne, Parkville, VIC 3010, Australia. 
  (\email{hailong.guo@unimelb.edu.au}).}
\and Haibo Li \thanks{School of Mathematics and Statistics, The University of Melbourne, Parkville, VIC 3010, Australia.  
  (\email{haibo.li@unimelb.edu.au}).}
  }

\usepackage{amsopn}
\DeclareMathOperator{\diag}{diag}
\usepackage{multirow}

\newcommand\bx{\boldsymbol{x}}

\newcommand\bbf{\boldsymbol{f}}
\newcommand\by{\boldsymbol{y}}

\newcommand\bv{\boldsymbol{v}}
\newcommand\bu{\boldsymbol{u}}
\newcommand\bw{\boldsymbol{w}}
\newcommand\bb{\boldsymbol{b}}
\newcommand\bbeta{\boldsymbol{\eta}}
\newcommand\bphi{\boldsymbol{\phi}}
\newcommand\bvarphi{\boldsymbol{\varphi}}

\newcommand\calH{\mathcal{H}}
\newcommand\calL{\mathcal{L}}

\newcommand\rd{\mathrm{d}}
\newcommand\rvec{\mathrm{vec}}

\newcommand\argmax{\mathop{\text{argmax}}}


\ifpdf
\hypersetup{
  pdftitle={Derivative estimation by RKHS},
  pdfauthor={H. Guo, and H. Li}
}
\fi




\begin{document}

\maketitle

\begin{abstract}
Learning the governing equations from time-series data has gained increasing attention due to its potential to extract useful dynamics from real-world data. Despite significant progress, it becomes challenging in the presence of noise, especially when derivatives need to be calculated. To reduce the effect of noise, we propose a method that simultaneously fits both the derivative and trajectory from noisy time-series data. Our approach formulates derivative estimation as an inverse problem involving integral operators within the forward model, and estimates the derivative function by solving a regularization problem in a vector-valued reproducing kernel Hilbert space (vRKHS). We derive an integral-form representer theorem, which enables the computation of the regularized solution by solving a finite-dimensional problem and facilitates efficiently estimating the optimal regularization parameter. By embedding the dynamics within a vRKHS and utilizing the fitted derivative and trajectory, we can recover the underlying dynamics from noisy data by solving a linear regularization problem. Several numerical experiments are conducted to validate the effectiveness and efficiency of our method.
\end{abstract}

\begin{keywords}
  dynamical system, system identification, derivative estimation, vector-valued reproducing kernel Hilbert space, integral-form representer theorem, Tikhonov regularization
\end{keywords}

\begin{MSCcodes}
37M10, 65P99, 65R32
\end{MSCcodes}

\section{Introduction}
Dynamical systems are prevalent in various fields, including engineering \cite{dixon2003nonlinear,sastry2013nonlinear,kamalapurkar2018reinforcement}, biology \cite{freedman1980deterministic,janson2012non,favela2020dynamical}, economics \cite{brock1991non}, physics \cite{arnol2013mathematical,fuchs2014nonlinear}, and mathematics \cite{katok1995introduction,perko2013differential}. However, in many cases, systems driven by physical principles may have unknown parameters, such as the mass or size of mechanical components, or their dynamics may be completely unknown. In these scenarios, system identification techniques are employed to estimate the system's dynamics from the data it generates \cite{Nelles2001,keesman2011system}. Consider the following autonomous dynamical system:
\begin{equation}\label{update1}
	\begin{cases}
	\dot{\bx}(t) = \bbf(\bx(t))  \\
	\bx(0) = \bx_0 ,
	\end{cases}
\end{equation}
where $\bx:[0,T]\rightarrow\mathbb{R}^{d}$ is the system state and  $\bbf: \mathbb{R}^{d}\rightarrow \mathbb{R}^{d}$ is the underlying dynamics. Suppose we have $n$ noisy observational states $\{(t_i,\by_i)\}_{i=1}^{n}$ with $\by_i=\bx(t_i)+\bbeta_i$, $t_i\in[0,T]$, where $\bbeta_i\sim\mathcal{N}(0,\delta^2I_d)$ are i.i.d. Gaussian white noise and $I_d$ is the identity matrix of order $d$. The system identification aims to learn the dynamics $\bbf$ from those time-series observations.

As the collection of massive datasets grows in the era of big data, discovering the underlying dynamics within these data has become a key objective in scientific research. In past years, there have been many attempts to extract dynamical equations from time-series data. For example, Bongard and Schmidt applied symbolic regression to identify nonlinear differential equations \cite{bongard2007automated,schmidt2009distilling}. A common approach represents $\bbf$ as a parameterized vector field and recovers the parameters by integrating the solution along the vector field starting from guessed initial conditions and then comparing the resulting trajectories with the observed time-series data. For example, ODENet represents $\bbf$ as a linear combination of polynomials of various orders and determines the coefficients through sparse regression \cite{hu2022revealing}; alternatively, $\bbf$ can be modeled using the units of a deep neural network, as demonstrated in methods proposed in \cite{chen2018neural,qin2019data}. In \cite{lahouel2024learning}, it is assumed that $\bbf$ belongs to a vector-valued reproducing kernel Hilbert space (vRKHS) and can be recovered by solving a constrained optimization problem, which is derived from matching the Euler approximations of the ODE solution with the observed data.

On the other hand, if the values of the derivative function $\dot{\bx}(t)$ at $\{t_i\}_{i=1}^{n}$ are known, fitting the dynamics from the noisy observation reduces to solving a regression problem, where usually a regularized problem with loss term $\sum_{i=1}^{n}\|\by_i-\dot{\bx}(t_i)\|_{2}^{2}$ should be solved. There are many such methods for recovering $\bbf$ that assume that $\{\dot{\bx}(t_i)\}_{i=1}^{n}$ are available or can be easily estimated; see e.g. \cite{wang2011predicting,zhang2018robust,wu2019numerical,wu2020structure,kang2021ident}. Among these methods, the Sparse Identification of Nonlinear Dynamics (SINDy) method is a representative and successful framework, which assumes that $\bbf$ is a linear combination of a set of basic functions \cite{brunton2016discovering}. It first estimates $\{\dot{\bx}(t_i)\}_{i=1}^{n}$ by some noisy removal algorithms, and then recover $\bbf$ by solving a linear sparse regression problems. In many real-world applications, however, the observation data are often unevenly distributed and heavily corrupted by noise. This presents a significant challenge in reliably estimating the derivative values, which in turn severely limits the algorithm's robustness to noise and, consequently, its applicability to real-world data.

Various methods have been proposed to address this challenge. In SINDy, in addition to the finite difference method, a total variation (TV) regularization algorithm is used to estimate the derivatives at the given time points. In \cite{messenger2021weak}, a method called WEAK SINDy was proposed to deal with observational noise, where $\{\dot{\bx}(t_i)\}_{i=1}^{n}$ are not required. Instead, a weak-form ODE is derived by using carefully designed test functions that help smooth out the random noise through the calculation of integrals. In \cite{hokanson2023simultaneous}, a method that simultaneously identifies and denoises a dynamical system was proposed, where a discretized ODE-constrained optimization problem needs to be solved. In comparison, the method in \cite{rudy2019deep} uses a neural network to model $\bbf$, learning the dynamics and estimating the observational noise simultaneously. Based on the Runge-Kutta method for solving ODEs, the method proposed in \cite{rudy2019smoothing} can simultaneously denoise the data and recover the parameters of the dynamics. Most of these methods require solving a nonlinear optimization problem, where at each step, the ODE needs to be solved explicitly or implicitly. While these methods are generally effective, they often involve solving complex optimization problems or repeatedly solving the ODE, which significantly increases the computational overhead.

Our goal is to estimate derivatives from noisy data as the first step, since more accurate derivative estimation improves the ability to develop dynamic or statistical models and generate precise forecasts. One major challenge in estimating derivatives is that it is an ill-posed problem, meaning that the noise in the data can be significantly amplified when using simple methods like finite differences to calculate the derivatives. In addition to the aforementioned method based on TV regularization \cite{chartrand2011numerical}, there are various approaches for estimating derivatives of noisy data. One such method involves applying a smoothing filter to the data first, followed by a finite difference calculation \cite{butterworth1930theory}. The second method relies on building a local model of the data using linear regression and then analytically calculating the derivative based on this model \cite{savitzky1964smoothing, belytschko1996meshless}. The third one is based on the Kalman filter with unknown noise and process characteristics \cite{crassidis2004optimal,aravkin2017generalized}. By formulating the derivative calculation as an inverse problem with an integration operator as the forward model, several methods have been  proposed to estimate the derivatives by solving a Tikhonov regularization problem with some appropriate smoothing penalties; see e.g. \cite{cullum1971numerical,lu2006numerical,chartrand2011numerical}. It is important to note that selecting an appropriate regularization parameter is crucial for obtaining a satisfactory regularized solution, and unfortunately, determining the optimal regularization parameter is not a trivial task. For example, the method in \cite{chartrand2011numerical} relies on grid search to determine a good regularization parameter, which requires solving the regularization problem multiple times with different parameter values in order to select the best one, which can be computationally expensive. Additionally, most of these methods are limited to handling 1-dimensional data, which means for the data $\{(t_i,\by_{i})\}_{i=1}^{n}$ with $\by_i\in\mathbb{R}^{d}$, there are $d$ regularization problems that need to be solved, one for each dimension of the data. 

In this paper, we estimate the values of $\dot{\bx}(t)$ at $\{t_{i}\}_{i=1}^{n}$ for a continuous $\dot{\bx}(t)\in\mathbb{R}^{d}$ by solving the regularized ill-posed inverse problem:
\begin{equation}\label{regu0}
  \min_{\bphi\in\calH}\sum_{i=1}^{n}\left\|\bx_0+\int_{0}^{t_i}\bphi(s)\rd s-\by_i\right\|_{2}^2+\lambda\|\bphi\|_{\calH}^2 ,
\end{equation}
where $\bphi(s):=\dot{\bx}(t)$ and $\calH$ is the hypothesis space. This formulation follows the approach in \cite{chartrand2011numerical}, but $\dot{\bx}(t)$ is allowed to be vector-valued, and $\calH=\calH_K$ is chosen as an $\mathbb{R}^{d}$-valued reproducing kernel Hilbert space (RKHS) with a reproducing kernel $K$ \cite{carmeli2006vector,carmeli2010vector}. To obtain the solution of \cref{regu0}, we first establish an integral-form representer theorem for the vRKHS, which shows that the solution must be a linear combination of a finite number of basis functions, where the basis functions depend on both the reproducing kernel and the data. With the help of the integral-form representer theorem, we only need to solve a finite-dimensional linear regularization problem to obtain the regularized solution. We demonstrate that by using commonly employed kernels, such as the Gaussian or Matérn kernels, the integral computation required for forming the Gram matrix can be avoided. Furthermore, the optimal regularization parameter can be efficiently estimated using the L-curve method \cite{hansen1999curve}. The resulting algorithm, which requires solving \cref{regu0} only once and does not require computing any integrals, can simultaneously estimate both the values of $\dot{\bx}(t)$ and $\bx(t)$ at $\{t_i\}_{i=1}^{n}$.

With the estimated $\{\dot{\bx}(t_i)\}_{i=1}^{n}$ and $\{\bx(t_i)\}_{i=1}^{n}$, we can reconstruct $\bbf$ following the procedure of SINDy. Additionally, we can employ a nonparametric inference method to model $\bbf$ as an element in an $\mathbb{R}^{d}$-valued RKHS. This approach eliminates the need to specify the function form of $\bbf$ using a dictionary of pre-defined basis functions, offering greater flexibility in modeling the dynamics. The vector field $\bbf$ can be recovered by solving a linear vRKHS regularization problem, where the representer theorem for vRKHS \cite{kadri2016operator} enables us to compute the solution by solving a finite-dimensional regularization problem, where the optimal regularization parameter can be efficiently determined by the L-curve method. This approach consists of two steps: first denoising the data, and then learning the dynamics. Each step requires solving only a finite-dimensional linear Tikhonov regularization problem. In contrast to other methods, this approach avoids many complex computations, making the algorithm more efficient and straightforward to implement.

The rest of this paper is organized as follows: In \Cref{sec2}, we provide a brief review of SINDy and the derivative estimation method employed, as well as the basic properties of vRKHS. In \Cref{sec3}, we present our method for estimating derivatives. In \Cref{sec4} we discuss the practical computations for estimating $\{\dot{\bx}(t_i)\}_{i=1}^{n}$ and $\{\bx(t_i)\}_{i=1}^{n}$ simultaneously. In \Cref{sec5}, we describe how to use these estimates to reconstruct the dynamics by embedding $\bbf$ in a vRKHS. Numerical experiments are presented in \Cref{sec6}, and the conclusion is provided in \Cref{sec7}.

\paragraph{Notations}
The notations frequently referenced throughout the paper are listed in \Cref{tab0}.
\begin{table}[tbh] 
  \caption{Table of notations. }  \label{tab0}\vspace{-2mm}
  \centering
\begin{tabular}{ c ll }
\toprule
Notation & Description \\
\toprule
$\bbf, \bphi, \bvarphi $ & $\mathbb{R}^{d}$-valued maps \\
\hline $x_i, \bx, \mathbf{X}$ & $x_i$ is a scalar/function, $\bx=(x_{1},\dots,x_{n})^{\top}$, $\mathbf{X}=(\bx_1,\dots,\bx_n)^{\top}$ \\
\hline $y_i, \by,\mathbf{Y}$ & $y_i$ is a scalar/function, $\by=(y_{1},\dots,y_{n})$, $\mathbf{Y}=(\by_1,\dots,\by_n)^{\top}$ \\ 
\hline $b_i,\bb,\mathbf{B}$ & $b_i$ is a vector, $\bb=(b_{1}^{\top},\dots,b_{n}^{\top})^{\top}$, $\mathbf{B}=(b_1,\dots,b_n)$ \\
\hline $v_i,\bv,\mathbf{V}$ & $v_i$ is a coefficient vector, $\bv=(v_{1}^{\top},\dots,v_{n}^{\top})^{\top}$, $\mathbf{V}=(v_1,\dots,v_n)$ \\ 
\hline $G_{ij},\mathbf{G}$ & $G_{ij}$ is a $d$-by-$d$ matrix, $\mathbf{G}=(G_{ij})_{1\leq i,j\leq n}$ \\    
\hline $\mathbf{0}, I_n$ & zero vector/matrix, identity matrix of order $n$ \\            
\bottomrule
\end{tabular}
\end{table}

\section{Preliminary}\label{sec2}
First, we provide a brief review of the SINDy framework for learning dynamics from data, along with the TV regularization method for estimating derivatives from noisy data. Then, we review some basis properties of the vRKHS.

\subsection{SINDy and TV derivative estimation method}
The SINDy algorithm \cite{brunton2016discovering} has been shown to be successful in learning a sparsely
represented nonlinear dynamics when noise is small and dynamic scales do not vary across multiple orders of magnitude. Suppose $\bbf=(f_1,\dots,f_d):\mathbb{R}^{d}\to\mathbb{R}^{d}$ with $f_i:\mathbb{R}^{d}\rightarrow\mathbb{R}$. This framework first chooses a dictionary of basis functions $\{\psi_j\}_{j=1}^{J}$ with $\psi_j:\mathbb{R}^{d}\rightarrow\mathbb{R}$, and assumes that $\bbf$ can be represented componentwisely by 
\begin{equation}\label{candbasis}
f_{i}(\bx)=\sum_{j=1}^{J}w_{ji}\psi_{j}(\bx), \ \ i=1,\dots,d,
\end{equation}
where $\mathbf{W}=(w_{ji})\in\mathbb{R}^{J\times d}$ composed by the coefficients should be a sparse matrix. Substituting \cref{candbasis} into \cref{update1} with data $\{(t_i,\bx(t_i))\}_{i=1}^{n}$ leads to
\[
\dot{\mathbf{X}} = \Theta(\mathbf{X})\mathbf{W},
\]
where $\mathbf{X}=(\bx(t_1),\dots,\bx(t_n))^{\top}\in\mathbb{R}^{n\times d}$, $\dot{\mathbf{X}}=(\dot{\bx}(t_1),\dots,\dot{\bx}(t_n))^{\top}\in\mathbb{R}^{n\times d}$ and $\Theta(\mathbf{X})=(\psi_{j}(\bx(t_i)))\in\mathbb{R}^{n\times J}$. Given the noisy observations $\{(t_i, \by_i)\}_{i=1}^{n}$, SINDy uses the noisy data $\mathbf{Y}=(\by_1,\dots,\by_{n})^{\top}$ to form the matrix $\Theta(\mathbf{Y})$ and uses the time derivative $\dot{\mathbf{Y}}$ estimated from $\mathbf{Y}$ to replace $\dot{\mathbf{X}}$. Then it obtains the sparse coefficients $\{w_{ji}\}$ by solving the least squares problem
\begin{equation*}
  \min_{\mathbf{W}\in\mathbb{R}^{J\times d}}\|\dot{\mathbf{Y}}-\Theta(\mathbf{Y})\mathbf{W}\|_{F}^2
\end{equation*}
by the sequentially-thresholded least squares method. Here $\|\cdot\|_{F}$ is the Frobenius norm of a matrix.

Due to the presence of observational noise, simple methods like finite differences are not robust for estimating time derivatives from $\{(t_i,\by_i)\}_{i=1}^{n}$. SINDy leverages the method from \cite{chartrand2011numerical} to estimate the derivatives, which utilizes TV regularization to improve robustness in the presence of noise. Given a function $g:[0,T]\rightarrow\mathbb{R}$ (for convenience assume $g(0)=0$) where $g\in L^{2}([0,T])$, this method computes the derivative of $g$ by solving the TV regularization problem
\begin{equation}\label{TV_deriv}
  \min_{u\in \mathrm{BV}([0,T])} \frac{1}{2}\int_{0}^{T}|Au-g|^2\rd t + \lambda \mathrm{TV}(u),
\end{equation}
where $(Au)(t):=\int_{0}^{t}u(s)\rd s$, $\mathrm{TV}(u)=\int_{0}^{T}|u'(t)|\rd t$, $\mathrm{BV}([0,T])$ is the space of functions of bounded variation, and $\lambda$ is the regularization parameter. Using the TV regualrization method, the noise can be suppressed in the derivative, and it does not suppress jump discontinuities, which allows for the computation of discontinuous derivatives. We remark that the TV method is more proper to estimate a piecewise constant derivative function, as it enforces the reconstruction of a function with bounded variation.

For the time-series noisy data $\{(t_i,\by_i)\}_{i=1}^{n}$, the integral in \cref{TV_deriv} should be discretized followed by an iterative method \cite{vogel1996iterative,vogel2002computational}. In practical computations, determining an appropriate value for $\alpha$ is challenging, and the regularization problem needs to be solved multiple times with different values of $\lambda$ to select the best one. Additionally, for $d$-dimensional data, $d$ separate regularization problems must be solved, one for each dimension of the data. This process can be computationally expensive and time-consuming.

\subsection{Vector-valued RKHS}
We review several fundamental properties of the vRKHS, which will be utilized in the subsequent sections. For simplicity, we only review the vRKHS on $\mathbb{R}$, while the definition and properties on $\mathbb{C}$ are similar. For a detailed theoretical treatment, we refer the reader to \cite{carmeli2006vector, carmeli2010vector, kadri2016operator}. For any nonempty sets $X$ and $Y$, we use $Y^{X}$ to denote the set of all maps from $X$ to $Y$.

\begin{definition}[Reproducing kernel]
  Let $X$ be a compact metric space. The map $K:X\times X \rightarrow \mathbb{R}^{d\times d}$ is called an $\mathbb{R}^d$-valued reproducing kernel on $X$ if 
  \begin{enumerate}
    \item[(1).] for any $x, x'\in X$ it holds $K(x,x')=K(x',x)^{\top}$;
    \item[(2).] for any $x_{1},\dots,x_{N}\in X$ and $z_{1},\dots,z_{N}\in\mathbb{R}^{d}$ it holds
    \[\sum_{i=1}^{N}\langle K(x_i, x_j)z_i, z_j \rangle_2 \geq 0 , \]
    where $\langle\cdot,\cdot\rangle_2$ is the Euclidean inner product on $\mathbb{R}^{d}$.
  \end{enumerate}
\end{definition}

\begin{definition}[Vector-valued RKHS]\label{def:vrkhs}
   A Hilbert space $\calH_K\subseteq (\mathbb{R}^d)^{X}$ is called an $\mathbb{R}^d$-valued reproducing kernel Hilbert space if there exist an $\mathbb{R}^d$-reproducing kernel $K:X\times X \rightarrow \mathbb{R}^{d\times d}$ such that
   \begin{enumerate}
    \item[(1).] for any fixed $v\in\mathbb{R}^d$ and $x'\in X$, the map $\varphi: x \mapsto  K(x,x')v$ belongs to $\calH_K$;
    \item[(2).] for every $x\in X$, $v\in\mathbb{R}^d$ and $\varphi\in\calH_K$, it holds $\langle \varphi, K(x,\cdot)v\rangle_{\calH_K}=\langle\varphi(x),v\rangle_2$, called the reproducing property.
   \end{enumerate}
\end{definition}

The following well-known theorem is a generalization for the real-valued RKHS, establishing a one-to-one correspondence between the vector-valued reproducing kernel and the vector-valued RKHS.

\begin{theorem}[Moore-Aronszajn]
  Suppose $K$ is an $\mathbb{R}^d$-valued reproducing kernel on $X$. Then there is a unique $\mathbb{R}^d$-valued RKHS $\calH\subseteq (\mathbb{R}^d)^{X}$ for which $K$ is the reproducing kernel.
\end{theorem}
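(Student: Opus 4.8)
The plan is to carry out the classical Moore--Aronszajn construction in the matrix-valued setting: build the space explicitly from kernel sections, then settle uniqueness. First I would introduce the pre-Hilbert space
\[
  \calH_0 := \mathrm{span}\{K(\cdot,x)v : x\in X,\ v\in\mathbb{R}^d\},
\]
where $K(\cdot,x)v$ denotes the map $y\mapsto K(y,x)v$ that property~(1) of \Cref{def:vrkhs} forces into any candidate RKHS. On $\calH_0$ I would define a bilinear form by
\[
  \langle\bvarphi,\bphi\rangle_0 := \sum_{i,j}\langle K(x_j',x_i)v_i,w_j\rangle_2
\]
for $\bvarphi=\sum_i K(\cdot,x_i)v_i$ and $\bphi=\sum_j K(\cdot,x_j')w_j$, this being exactly the value the desired reproducing property must assign.

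Next come the routine verifications. I would show $\langle\cdot,\cdot\rangle_0$ is independent of the chosen representations by rewriting it in the two one-sided forms $\sum_j\langle\bvarphi(x_j'),w_j\rangle_2$ and $\sum_i\langle v_i,\bphi(x_i)\rangle_2$ (the latter via the symmetry $K(x,x')=K(x',x)^\top$); since the first depends only on $\bvarphi$ as a function and the second only on $\bphi$, the value is unambiguous. Symmetry of the form is immediate from the kernel symmetry, and positive semidefiniteness $\langle\bvarphi,\bvarphi\rangle_0=\sum_{i,j}\langle K(x_j,x_i)v_i,v_j\rangle_2\ge0$ is precisely condition~(2) in the definition of a reproducing kernel. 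To upgrade this to a genuine inner product I would apply Cauchy--Schwarz for semi-inner products to the reproducing identity $\langle\bvarphi,K(\cdot,x)w\rangle_0=\langle\bvarphi(x),w\rangle_2$, which gives $\|\bvarphi\|_0=0\Rightarrow\bvarphi\equiv0$.

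The substantive step is the completion, where I must realize the abstract metric completion of $\calH_0$ concretely inside $(\mathbb{R}^d)^X$. The key estimate is the pointwise bound $\|\bvarphi(x)\|_2\le\|K(x,x)\|^{1/2}\|\bvarphi\|_0$, obtained from the reproducing identity together with $\|K(\cdot,x)w\|_0^2=\langle K(x,x)w,w\rangle_2$. This forces every $\|\cdot\|_0$-Cauchy sequence to converge pointwise, so I would take $\calH_K$ to be the set of all such pointwise limits, equipped with the inherited inner product; the same bound shows distinct elements of the completion yield distinct functions, so the inner product is well defined. Property~(1) holds because $\calH_0\subseteq\calH_K$, and the reproducing property passes from $\calH_0$ to its closure by continuity of point evaluation and density of $\calH_0$.

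For uniqueness I would let $\calH'$ be any $\mathbb{R}^d$-valued RKHS with reproducing kernel $K$: property~(1) gives $\calH_0\subseteq\calH'$, and the reproducing property forces $\langle K(\cdot,x)v,K(\cdot,x')w\rangle_{\calH'}=\langle K(x',x)v,w\rangle_2$, so the inner product of $\calH'$ coincides with $\langle\cdot,\cdot\rangle_0$ on $\calH_0$; moreover any $\bphi\in\calH'$ orthogonal to all sections satisfies $\langle\bphi(x),w\rangle_2=0$ for every $x,w$, hence $\bphi\equiv0$, proving $\calH_0$ dense in $\calH'$. Thus $\calH'$ is a realization of the completion of $\calH_0$, which the pointwise-limit description identifies with $\calH_K$. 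I expect the completion/realization step to be the only genuine obstacle---guaranteeing the abstract completion embeds injectively into $(\mathbb{R}^d)^X$ and that the reproducing property survives passage to the limit---while everything else reduces to bookkeeping driven by the two defining conditions on $K$.
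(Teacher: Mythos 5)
There is no proof in the paper to compare against: the theorem is stated as a well-known fact, with the reader referred to the vRKHS literature (Carmeli et al.) for its justification. So your construction must be judged on its own terms, and it is the classical Moore--Aronszajn argument carried over to matrix-valued kernels. Most of it is sound: the two one-sided expressions $\sum_j\langle\bvarphi(x_j'),w_j\rangle_2$ and $\sum_i\langle v_i,\bphi(x_i)\rangle_2$ do establish that $\langle\cdot,\cdot\rangle_0$ is representation-independent, Cauchy--Schwarz together with the reproducing identity does give definiteness, and the uniqueness argument (the inner products agree on $\calH_0$, and $\calH_0$ is dense in any candidate $\calH'$ since orthogonality to all kernel sections forces a function to vanish) is complete.

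The gap is at the step you yourself single out as the crux, and the justification you offer there is wrong. The evaluation bound $\|\bvarphi(x)\|_2\le\|K(x,x)\|_2^{1/2}\|\bvarphi\|_0$ gives only one direction: norm-Cauchy sequences converge pointwise, and norm-equivalent Cauchy sequences have equal pointwise limits. It cannot show that ``distinct elements of the completion yield distinct functions,'' because it bounds point values by the norm, whereas injectivity requires control in the opposite direction. What is actually needed for your inner product on the space of pointwise limits to be well defined is the converse statement: if $(\bphi_n)\subset\calH_0$ is $\|\cdot\|_0$-Cauchy and $\bphi_n(x)\to0$ for every $x\in X$, then $\|\bphi_n\|_0\to0$. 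The standard repair: fix $\varepsilon>0$ and $N$ with $\|\bphi_n-\bphi_m\|_0<\varepsilon$ for $n,m\ge N$; for fixed $m\ge N$ write $\bphi_m=\sum_{i}K(\cdot,x_i)v_i$ as a finite sum, so that the reproducing identity on $\calH_0$ gives $\langle\bphi_n,\bphi_m\rangle_0=\sum_i\langle\bphi_n(x_i),v_i\rangle_2\to0$ as $n\to\infty$; then
\[
  \|\bphi_n\|_0^2=\langle\bphi_n,\bphi_n-\bphi_m\rangle_0+\langle\bphi_n,\bphi_m\rangle_0
  \le \Big(\sup_k\|\bphi_k\|_0\Big)\varepsilon+\langle\bphi_n,\bphi_m\rangle_0,
\]
and letting $n\to\infty$ and then $\varepsilon\to0$ gives $\|\bphi_n\|_0\to0$ (Cauchy sequences are bounded, so the supremum is finite). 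The same density argument is what makes the completion embed injectively into $(\mathbb{R}^d)^X$: if all point evaluations of a limit element vanish, then it is orthogonal to $\calH_0$, hence zero. With this lemma in place, the rest of your outline---the realization of the completion, the passage of the reproducing property to the closure, and the identification of any $\calH'$ with $\calH_K$---goes through as you describe.
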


The following representer theorem plays a key role in learning vector-valued functions from finite data using the kernel method. For a more general discussion of the representer theorem for vRKHS, see, e.g. \cite{wahba1990spline, wendland2004scattered}.

\begin{theorem}[Representer theorem]\label{thm:rt}
  Let $\calH_K\subseteq (\mathbb{R}^d)^{X}$ be an $\mathbb{R}^d$-valued RKHS with kernel $K$. Suppose we have data pairs $(x_1,z_1),\dots,(x_n,z_n)\in X\times\mathbb{R}^d$, and $\lambda>0$. Then the regularized optimization problem 
  \begin{equation}\label{regu2}
    \min_{\bvarphi\in\calH_K} J(\bvarphi):= \sum_{i=1}^{n}\|\bvarphi(x_i)-z_i\|_{2}^2 + \lambda\|\bvarphi\|_{\calH_K}^2
  \end{equation}
  has at least one solution, and any solution must have the representation
  \begin{equation}
    \bvarphi^{*} = \sum_{i=1}^{n}K(x_i,\cdot)v_i
  \end{equation}
  with some $v_i\in\mathbb{R}^d$ for $i=1,\dots,n$.
\end{theorem}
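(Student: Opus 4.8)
The plan is to follow the classical orthogonal-decomposition argument, adapted to the vector-valued setting. First I would introduce the finite-dimensional subspace
\[
\mathcal{V} := \mathrm{span}\{K(x_i,\cdot)v : i=1,\dots,n,\ v\in\mathbb{R}^d\}\subseteq\calH_K ,
\]
which is closed since $\dim\mathcal{V}\le nd<\infty$. Given any $\bvarphi\in\calH_K$, I would take the orthogonal decomposition $\bvarphi=\bvarphi_\parallel+\bvarphi_\perp$ with $\bvarphi_\parallel\in\mathcal{V}$ and $\bvarphi_\perp\in\mathcal{V}^\perp$, and show that the two terms of $J$ behave differently under this split.

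The key step is that the data-fitting term depends only on $\bvarphi_\parallel$. For each $i$ and each $v\in\mathbb{R}^d$, the reproducing property of \Cref{def:vrkhs}(2) gives
\[
\langle\bvarphi(x_i),v\rangle_2=\langle\bvarphi,K(x_i,\cdot)v\rangle_{\calH_K}=\langle\bvarphi_\parallel,K(x_i,\cdot)v\rangle_{\calH_K}=\langle\bvarphi_\parallel(x_i),v\rangle_2 ,
\]
where the middle equality uses that $K(x_i,\cdot)v\in\mathcal{V}$ is orthogonal to $\bvarphi_\perp$. Since this holds for every $v\in\mathbb{R}^d$, I would conclude the vector identity $\bvarphi(x_i)=\bvarphi_\parallel(x_i)$, so that $\sum_{i=1}^{n}\|\bvarphi(x_i)-z_i\|_2^2$ is unchanged when $\bvarphi$ is replaced by $\bvarphi_\parallel$. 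Meanwhile, Pythagoras yields $\|\bvarphi\|_{\calH_K}^2=\|\bvarphi_\parallel\|_{\calH_K}^2+\|\bvarphi_\perp\|_{\calH_K}^2$, whence $J(\bvarphi_\parallel)\le J(\bvarphi)$, with strict inequality whenever $\bvarphi_\perp\neq0$ because $\lambda>0$.

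From here both assertions follow quickly. Restricted to the finite-dimensional space $\mathcal{V}$, the functional $J$ is continuous and coercive (as $J(\bvarphi)\ge\lambda\|\bvarphi\|_{\calH_K}^2$), so it attains its minimum on $\mathcal{V}$; since the inequality above shows $\inf_{\calH_K}J=\inf_{\mathcal{V}}J$, this point is a global minimizer, establishing existence. Conversely, if $\bvarphi^{*}$ is any minimizer, then $\bvarphi^{*}_\perp=0$ (otherwise the strict inequality would contradict minimality), so $\bvarphi^{*}\in\mathcal{V}$, i.e.\ $\bvarphi^{*}=\sum_{i=1}^{n}K(x_i,\cdot)v_i$ for some $v_i\in\mathbb{R}^d$. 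The only genuinely vector-valued subtlety, and the step I would treat most carefully, is the passage from the scalar identities tested against all $v$ to the vector identity $\bvarphi(x_i)=\bvarphi_\parallel(x_i)$; the remainder is a direct transcription of the scalar representer-theorem argument.
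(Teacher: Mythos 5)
Your proof is correct, and each step holds up: the span $\mathcal{V}$ is finite-dimensional hence closed, the identity $\bvarphi(x_i)=\bvarphi_\parallel(x_i)$ follows from testing the reproducing property of \Cref{def:vrkhs} against every $v\in\mathbb{R}^d$, the Pythagorean split handles the penalty, and reducing existence to a coercive continuous minimization on $\mathcal{V}$ is legitimate. Be aware, though, that the paper never proves \Cref{thm:rt} at all: it quotes it as a classical result with citations, and reserves its own argument for the generalization it actually needs, the integral-form representer theorem (\Cref{repre_new}), whose proof follows the same projection skeleton as yours. The comparison is still instructive. For existence, the paper asserts that $J$ is bounded below, coercive, and lower semicontinuous on all of $\calH_K$ and appeals to the direct method, which implicitly requires weak compactness and weak lower semicontinuity; your route --- first establishing $\inf_{\calH_K}J=\inf_{\mathcal{V}}J$, then minimizing over the finite-dimensional $\mathcal{V}$ --- is more elementary and entirely self-contained. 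For the invariance of the data-fitting term, you can exploit the pointwise evaluations $\bvarphi(x_i)$ directly, which is the natural move here; in the integral-form setting of \Cref{repre_new} no pointwise evaluations appear, so the paper must instead expand each squared residual $\|\calL_i(\bphi)-\tilde{\by}_i\|_2^2$ through \Cref{lem:functional_reproducing} and verify that the resulting inner products are unchanged under the projection $\mathcal{P}_{\mathcal{M}}$. In short, your argument is the standard scalar representer-theorem proof transcribed faithfully to the vector-valued case --- exactly what this statement calls for --- while the paper's heavier machinery is only needed for the integral-form variant, where the loss is evaluated through linear functionals rather than point evaluations.
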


The representer theorem is particular useful for solving the regularization problem \cref{regu2}, since it transforms this infinite-dimensional optimization problem to a finite-dimensional problem. We only need to compute the coefficient vector $\{v_{i}\}_{i=1}^{n}$ to get the regularized solution.

\section{Estimating derivative function by vRKHS regularization}\label{sec3}
For the ODE \cref{update1}, suppose $\bbf$ is a  Lipschitz continuous vector field and $\bx(t)$ is a continuous $\mathbb{R}^d$-valued function. Then $\dot{\bx}(t)$ is continuous. Our aim is to obtain a good estimate of $\dot{\bx}(t)$ from the noisy time-series data $\{(t_i,\by_i)\}_{i=1}^{n}$. We assume that $\bphi(t):=\dot{\bx}(t)$ belongs to an $\mathbb{R}^{d}$-valued RKHS $\calH_K$ with reproducing kernel $K(t,s)$.

From now on, we consider the vRKHS directly related to our focused problem, i.e., we set $X=[0,T]$, but for notational simplicity, in some contexts we also use $X$ as an alternative. To estimate $\dot{\bx}(t)$, we consider the following regularization problem in $\calH_K$:
\begin{equation}\label{loss_1}
  \min_{\bphi\in\calH_K}\sum_{i=1}^{n}\left\|\bx_0+\int_{0}^{t_i}\bphi(s)\rd s - \by_i\right\|_{2}^2+\lambda\|\bphi\|_{\calH_K}^2 ,
\end{equation}
where the integral is calculated componentwisely for the $\mathbb{R}^{d}$-valued $\bphi(s)$. Note that for $\bphi=\dot{\bx}$, then $\bx_0+\int_{0}^{t_i}\bphi(s)\rd s=\bx(t_i)$ for $i=1,\dots,n$. Thus, \cref{loss_1} is a regularized loss function to fit the noisy data $\{(t_i,\by_i)\}_{i=1}^{n}$.

At first glance, \cref{loss_1} appears quite similar to \cref{regu2}. However, the Representer Theorem cannot be applied to \cref{loss_1}, as it does not include any $\bphi(t_i)$ term. In the following part, we establish a new representer theorem, which asserts that the solution to \cref{loss_1} can be expressed as a linear combination of certain basis functions.

For any $\bphi=(\phi_1,\dots,\phi_d)^{\top}\in\calH_K$ and $t_i\in[0,T]$, define the linear map 
\begin{equation}\label{int_op}
  \calL_i: \calH_K \rightarrow \mathbb{R}^{d}, \ \ \ 
  \bphi \mapsto \int_{0}^{t_i}\bphi(s)\rd s := \begin{pmatrix}\int_{0}^{t_i}\phi_{1}(s)\rd s, \cdots, \int_{0}^{t_i}\phi_{d}(s)\rd s\end{pmatrix}^{\top} .
\end{equation}
Define the feature map 
\begin{equation}
  \Phi: X\times \mathbb{R}^{d} \rightarrow \calH_K , \ \ \ 
  (x, v) \mapsto K(\cdot, x)v .
\end{equation}
For any two metric spaces $X$ and $Y$, denote by $C(X,Y)$ the set of all continuous maps from $X$ to $Y$. If $K\in C(X\times X, \mathbb{R}^{d\times d})$, we call $K$ a continuous kernel. The following result will be used to establish the new representer theorem.

\begin{lemma}\label{lem:functional_reproducing}
  Let $\calH_K$ be an $\mathbb{R}^d$-valued RKHS with a continuous kernel $K$. For any $\bphi\in\calH_K$ and $v\in\mathbb{R}^{d}$ and any $\calL_i$ defined as \cref{int_op}, it holds
  \begin{equation}\label{L_func}
    \langle \calL_i(\bphi), v\rangle_2 = \langle \bphi, \calL_i(\Phi(\cdot,v)) \rangle_{\calH_K} .
  \end{equation}
\end{lemma}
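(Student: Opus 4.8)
The plan is to collapse both sides of \cref{L_func} onto the common scalar quantity $\int_0^{t_i}\langle\bphi(s),v\rangle_2\,\rd s$, using the reproducing property to move between the Euclidean and the $\calH_K$ inner products, and the fact that a bounded linear functional commutes with a vector-valued integral. First I would treat the left-hand side. Because $\calL_i(\bphi)=\int_0^{t_i}\bphi(s)\,\rd s$ is an ordinary componentwise integral of the continuous $\mathbb{R}^d$-valued map $\bphi$, linearity of the Euclidean inner product in the fixed vector $v$ gives
\[
\langle \calL_i(\bphi), v\rangle_2 = \left\langle \int_{0}^{t_i}\bphi(s)\,\rd s,\, v\right\rangle_2 = \int_{0}^{t_i}\langle \bphi(s), v\rangle_2\,\rd s .
\]
The reproducing property of \Cref{def:vrkhs} then yields $\langle\bphi(s),v\rangle_2=\langle\bphi,K(\cdot,s)v\rangle_{\calH_K}=\langle\bphi,\Phi(s,v)\rangle_{\calH_K}$ for every $s$, so the left-hand side equals $\int_0^{t_i}\langle\bphi,\Phi(s,v)\rangle_{\calH_K}\,\rd s$.

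The second step is to recognize the right-hand side as the very same integral. By definition $\calL_i(\Phi(\cdot,v))=\int_0^{t_i}\Phi(s,v)\,\rd s=\int_0^{t_i}K(\cdot,s)v\,\rd s$, which I would interpret as a Bochner integral in $\calH_K$. To legitimize this I would check that $s\mapsto\Phi(s,v)$ is a \emph{strongly} continuous $\calH_K$-valued map on the compact interval $[0,t_i]$: expanding $\|\Phi(s,v)-\Phi(s',v)\|_{\calH_K}^2$ via the reproducing property expresses it through the blocks $K(s,s)$, $K(s,s')$, $K(s',s')$ paired with $v$, and continuity of $K$ forces this to vanish as $s'\to s$. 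A continuous map into a Hilbert space over a compact interval is Bochner integrable, so the integral is a bona fide element of $\calH_K$, and since $\langle\bphi,\cdot\rangle_{\calH_K}$ is a bounded linear functional it passes through the integral:
\[
\langle \bphi, \calL_i(\Phi(\cdot,v))\rangle_{\calH_K} = \left\langle \bphi,\, \int_0^{t_i}\Phi(s,v)\,\rd s\right\rangle_{\calH_K} = \int_0^{t_i}\langle\bphi,\Phi(s,v)\rangle_{\calH_K}\,\rd s .
\]
Comparing with the expression obtained for the left-hand side closes the argument.

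I expect the interchange of the inner product and the integral on the right-hand side to be the only genuine obstacle, and this is precisely where the continuity hypothesis on $K$ is needed: it is what upgrades mere measurability to strong continuity, guaranteeing existence of the $\calH_K$-valued integral in the Bochner sense and hence the commutation property for the bounded functional $\langle\bphi,\cdot\rangle_{\calH_K}$. Everything else reduces to routine bookkeeping with the reproducing property. As a byproduct, the same continuity estimate bounds $\|\Phi(s,v)\|_{\calH_K}$ uniformly on $[0,t_i]$, which shows $\calL_i$ is a bounded operator $\calH_K\to\mathbb{R}^d$ and ensures all the integrals above are finite, so the manipulations are rigorous.
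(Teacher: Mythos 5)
Your proof is correct, but it takes a genuinely different route from the paper's. The paper constructs $\calL_i(\Phi(\cdot,v))$ by hand: it approximates the integral by Riemann sums $\sum_{j}w_j K(s_j,\cdot)v$, shows these form a Cauchy sequence in $\calH_K$ (via convergence of the double Riemann sums of the kernel), identifies the $\calH_K$-limit with the pointwise integral through the continuous embedding of $\calH_K$ into $C([0,T],\mathbb{R}^{d})$, and then obtains \cref{L_func} by applying the reproducing property term by term and passing to the limit. You instead delegate all the limiting arguments to vector-valued integration theory: strong continuity of $s\mapsto\Phi(s,v)$ (checked from continuity of $K$) gives Bochner integrability over the compact interval, and the identity then drops out because bounded linear functionals commute with Bochner integrals, while the left-hand side is handled by pushing $v$ inside the componentwise integral and applying the reproducing property pointwise. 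Your route is shorter, makes the role of the continuity hypothesis transparent, and yields boundedness of $\calL_i$ as a byproduct; the paper's route is elementary and self-contained (it essentially re-proves the needed fragment of Bochner theory by hand), and it directly identifies the abstract $\calH_K$-element with the concrete function $t\mapsto\int_{0}^{t_i}K(s,t)v\,\rd s$, which is the form used later when the corollary evaluates the regularized solution pointwise. To fully align your argument with the definition of $\calL_i$ in \cref{int_op}, which acts by componentwise (pointwise) integration, you should add one line: the Bochner integral and the pointwise integral coincide, because for each fixed $t$ and $u\in\mathbb{R}^{d}$ the functional $\boldsymbol{g}\mapsto\langle \boldsymbol{g}(t),u\rangle_2$ is bounded on $\calH_K$ by the reproducing property, so it too passes through the Bochner integral. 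With that addendum the two proofs establish exactly the same statement.
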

\begin{proof}
  Since $X$ is compact, it follows that the function $x\mapsto\|K(x,x)\|_2$ is locally bounded and $x\mapsto K(x,x')v \in C(X,\mathbb{R}^{d})$ for any fixed $x\in X$ and $v\in\mathbb{R}^{d}$. Therefore, $K$ is a Mercer kernel and $\calH_{K}$ is continuously embedded in $C([0, T],\mathbb{R}^{d})$; see \cite[Proposition 2.2]{carmeli2010vector}. We prove \cref{L_func} by the following two steps.

  Step 1: prove $\calL_i(\Phi(\cdot,v))\in\calH_K$. Fixing $t$ and $v$, then $K(s,t)v$ is a continuous map with values in $\mathbb{R}^d$. Thus, there exist countably many positive numbers $w_j$ and $s_j\in[0,t_i]$ such that
  \begin{align*}
    \calL_i(\Phi(\cdot,v))(t)=\int_{0}^{t_i}K(s,t)v\rd s
    = \lim_{m\rightarrow\infty}\sum_{j=1}^{m}w_j K(s_j,t)v
    =: \lim_{m\rightarrow\infty}\bphi_{i,m}(t),
  \end{align*}
  for any $t\in[0,T]$, and $\bphi_{i,m}=\sum_{j=1}^{m}w_j K(s_j,\cdot)v\in\calH_K$. Note that the integral $\int_{0}^{t_i}K(s,t)v\rd s$ is calculated componentwisely for $K(s,t)v$.  
  Since $[0,T]$ is compact and $\bphi_{i,m}$ is continuous, the above convergence is uniform and $\bphi_{i,m}\rightarrow\calL_i(\Phi(\cdot,y))$ in the Banach space $C([0,T],\mathbb{R}^{d})$. Now we prove $\{\bphi_{i,m}\}$ is a Cauchy sequence in $\calH_K$. For any $m'>m$, we have
  \begin{align*}
    \|\bphi_{i,m'}-\bphi_{i,m}\|_{\calH_K}^{2}
    &= \left\langle \sum_{j=m+1}^{m'}w_j K(s_j,\cdot)v, \sum_{l=m+1}^{m'}w_l K(s_l,\cdot)v \right\rangle_{\calH_K} \\
    &= v^{\top}\left(\sum_{j,l=m+1}^{m'}w_jw_l K(s_j,s_l)\right)v \rightarrow 0 \ \ (m\rightarrow +\infty),
  \end{align*}
  since 
  \begin{equation*}
    \sum_{j,l=1}^{m}w_jw_l K(s_j,s_l) \rightarrow \iint_{[0,t_i]^2}K(s,t)\rd s \rd t < \infty \ \ (m\rightarrow +\infty),
  \end{equation*}
  where the above integral and ``$<$'' are processed componentwisely.
  Therefore, there exist a $\bar{\bphi}\in\calH_K$ such that $\bphi_{i,m}\rightarrow\bar{\bphi}$ with the $\calH_K$ norm. Since the $\calH_K$ norm is stronger than the $C([0,T],\mathbb{R}^{d})$ norm, thereby $\bphi_{i,m}\rightarrow\bar{\bphi}$ with the $C([0,T],\mathbb{R}^{d})$ norm, which leads to $\calL_i(\Phi(\cdot,y))=\bar{\bphi}\in\calH_K$.

  Step 2: prove the equality \cref{L_func}. Using the above convergence relation, we have
  \begin{align*}
    \langle \bphi, \calL_i(\Phi(\cdot,v)) \rangle_{\calH_K}
    &= \langle \bphi,\sum_{j=1}^{\infty}w_j K(s_j,\cdot)v \rangle_{\calH_K} \\
    &= \sum_{j=1}^{\infty} \langle \bphi, w_j K(s_j,\cdot)v \rangle_{\calH_K} 
    = \sum_{j=1}^{\infty} w_j \langle \bphi(s_j), v \rangle_2 \\
    &= v^{\top} \cdot \int_{0}^{t_i}\bphi(s)\rd s = \langle \calL_i(\bphi), v\rangle_2 .
  \end{align*}
  The proof is completed.
\end{proof}

Recall that for a Banach space, a functional $f: Z \rightarrow \mathbb{R}$ is lower-semicontinous if $\liminf_{n\rightarrow n}f(z_n)\geq f(z)$ for any convergent $z_n\rightarrow z$ in $Z$. The functional $f$ is coercive if $\lim_{\|x\|_Z\rightarrow +\infty}f(x)=+\infty$. Now we can give the representer theorem for \cref{loss_1}.

\begin{theorem}[Integral-form representer theorem]\label{repre_new}
  Let $\calH_K$ be an $\mathbb{R}^d$-valued RKHS with a continous kernel $K$. Suppose we have data pairs $(t_1,\tilde{\by}_1),\dots,(t_n,\tilde{\by}_n)\in X\times\mathbb{R}^d$, linear operators $\calL_1,\dots,\calL_N:\calH_K\rightarrow\mathbb{R}^{d}$ defined as \cref{int_op}, and $\lambda>0$. Then the regularization problem 
  \begin{equation}\label{vrkhs_regu}
    \min_{\bphi\in\calH_K} J(\bphi):= \sum_{i=1}^{n}\|\calL_i(\bphi)-\tilde{\by}_i\|_{2}^2 + \lambda\|\bphi\|_{\calH_K}^2
  \end{equation}
  has at least one solution, and any solution must have the representation
  \begin{equation}\label{expan_new}
    \bphi^{*} = \sum_{i=1}^{n}\calL_i(\Phi(\cdot,v_i))
  \end{equation}
  with some $v_i\in\mathbb{R}^d$ for $i=1,\dots,n$.
\end{theorem}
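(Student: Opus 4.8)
The plan is to prove the two assertions separately: first that $J$ attains its minimum on $\calH_K$, and then that every minimizer lies in the finite-dimensional subspace spanned by the functions $\calL_i(\Phi(\cdot,v))$, whence the claimed representation follows by linearity. The decisive tool throughout is \Cref{lem:functional_reproducing}, which plays exactly the role that the ordinary reproducing property plays in the classical proof of \Cref{thm:rt}: it converts the action of the integral functional $\calL_i$ into an inner product against an explicit element of $\calH_K$.

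For existence I would run the direct method of the calculus of variations. Observe that $J(\bphi)\ge\lambda\|\bphi\|_{\calH_K}^2$, so $J$ is coercive and any minimizing sequence $\{\bphi_k\}$ with $J(\bphi_k)\to\inf J$ is bounded in $\calH_K$. Since $\calH_K$ is a Hilbert space, after passing to a subsequence we may assume $\bphi_k\rightharpoonup\bphi^{*}$ weakly. Each $\calL_i$ is a bounded linear operator: by the continuous embedding $\calH_K\hookrightarrow C([0,T],\mathbb{R}^{d})$ noted in the proof of \Cref{lem:functional_reproducing}, one has $\|\calL_i(\bphi)\|_2\le T\,\|\bphi\|_{\infty}\le C\,\|\bphi\|_{\calH_K}$. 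Hence $J$ is a finite sum of a continuous convex data term and the continuous convex penalty $\lambda\|\cdot\|_{\calH_K}^2$; being convex and strongly continuous, $J$ is weakly lower semicontinuous, so $J(\bphi^{*})\le\liminf_k J(\bphi_k)=\inf J$ and $\bphi^{*}$ is a minimizer. (Strict convexity of the penalty even yields uniqueness, though only existence is required here.)

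For the representation I would introduce the subspace
\begin{equation*}
  \mathcal{V}:=\operatorname{span}\{\calL_i(\Phi(\cdot,v)) : 1\le i\le n,\ v\in\mathbb{R}^{d}\}\subseteq\calH_K ,
\end{equation*}
which is well defined because \Cref{lem:functional_reproducing} (Step 1) guarantees $\calL_i(\Phi(\cdot,v))\in\calH_K$. Since $v\mapsto\Phi(\cdot,v)=K(\cdot,x)v$ and $\calL_i$ are both linear, $\mathcal{V}$ is spanned by the $nd$ elements $\calL_i(\Phi(\cdot,e_k))$, hence finite-dimensional and closed. I would then write any $\bphi\in\calH_K$ as $\bphi=\bphi_{\parallel}+\bphi_{\perp}$ with $\bphi_{\parallel}\in\mathcal{V}$ and $\bphi_{\perp}\perp\mathcal{V}$. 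Applying \Cref{lem:functional_reproducing} to $\bphi_{\perp}$ gives, for every $v\in\mathbb{R}^{d}$,
\begin{equation*}
  \langle\calL_i(\bphi_{\perp}),v\rangle_2=\langle\bphi_{\perp},\calL_i(\Phi(\cdot,v))\rangle_{\calH_K}=0 ,
\end{equation*}
so $\calL_i(\bphi_{\perp})=0$ and therefore $\calL_i(\bphi)=\calL_i(\bphi_{\parallel})$ for all $i$; the data term of $J$ thus depends only on $\bphi_{\parallel}$. By the Pythagorean identity $\|\bphi\|_{\calH_K}^2=\|\bphi_{\parallel}\|_{\calH_K}^2+\|\bphi_{\perp}\|_{\calH_K}^2$, so that $J(\bphi)\ge J(\bphi_{\parallel})$ with equality, since $\lambda>0$, only when $\bphi_{\perp}=0$. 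Hence every minimizer lies in $\mathcal{V}$, and collecting the spanning terms with the same index $i$ and using linearity of $\calL_i$ in $v$ collapses any element of $\mathcal{V}$ to $\sum_{i=1}^{n}\calL_i(\Phi(\cdot,v_i))$, which is exactly \cref{expan_new}.

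The main obstacle is conceptual rather than computational, and it has in fact already been absorbed into \Cref{lem:functional_reproducing}: transferring orthogonality in $\calH_K$ into the pointwise vanishing of the integral functionals $\calL_i$ requires that $\calL_i(\Phi(\cdot,v))$ genuinely belong to $\calH_K$ and satisfy the adjoint identity \cref{L_func}, which is where the Mercer/embedding machinery and the Cauchy-sequence argument enter. Once that identity is in hand, the projection argument is entirely standard. The only remaining point to handle with care is the reduction of a generic element of $\mathcal{V}$ to the indexed form in \cref{expan_new}, which relies on the joint linearity of $\Phi(\cdot,\cdot)$ and $\calL_i$ in the vector argument.
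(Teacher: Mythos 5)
Your proof is correct and follows essentially the same route as the paper: the same finite-dimensional subspace spanned by the elements $\calL_i(\Phi(\cdot,v))$, with \Cref{lem:functional_reproducing} transferring orthogonality to that subspace into vanishing of the data functionals, and the Pythagorean identity forcing any minimizer into it. If anything, your write-up is tighter in two places: showing $\calL_i(\bphi_{\perp})=0$ directly is cleaner than the paper's computation that the data term is invariant under the projection $\mathcal{P}_{\mathcal{M}}$, and your direct-method existence argument supplies the weak lower semicontinuity (via convexity and boundedness of the $\calL_i$) that the paper's one-line existence claim leaves implicit.
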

\begin{proof}
  Note that $J(\bphi)$ is bounded from below, coercive and lower-semicontinuous on $\calH_K$. It follows that $J(\bphi)$ has at least one minimizer. Suppose that $\bphi^*(x)$ is such a minimizer. Define the space 
  \begin{equation}
    \mathcal{M} = \mathrm{span}\{\bphi=\calL_i(\Phi(\cdot,v)), \ 1\leq i \leq N, \ v\in\mathbb{R}^{d}\}.
  \end{equation}
  It follows from \Cref{lem:functional_reproducing} that $\mathcal{M}$ is a finite-dimensional subspace of $\calH_K$. Using \Cref{lem:functional_reproducing} again, we get
  \begin{align*}
    \|\calL_i(\bphi)-\tilde{\by}_i\|_{2}^2 
    &= \langle \calL_i(\bphi), \calL_i(\bphi) \rangle_{2} - 2\langle \calL_i(\bphi), \tilde{\by}_i \rangle_{2} + \|\tilde{\by}_i\|_{2}^{2} \\
    &= \langle \bphi, \calL_{i}(\Phi(\cdot,\calL_i(\bphi)-2\tilde{\by}_i)) \rangle_{\calH_K} + \|\tilde{\by}_i\|_{2}^{2}.
  \end{align*}
  Denote by the $\mathcal{P}_{\mathcal{M}}$ the projector operator onto the closed subspace $\mathcal{M}$. Then we have
  \begin{align*}
    & \ \ \ \ \langle \bphi, \calL_{i}(\Phi(\cdot,\calL_i(\bphi)-2\tilde{\by}_i)) \rangle_{\calH_K} \\
    &= \langle \mathcal{P}_{\mathcal{M}}\bphi, \calL_{i}(\Phi(\cdot,\calL_i(\bphi)-2\tilde{\by}_i)) \rangle_{\calH_K} + \langle (I-\mathcal{P}_{\mathcal{M}})\bphi, \calL_{i}(\Phi(\cdot,\calL_i(\bphi)-2\tilde{\by}_i)) \rangle_{\calH_K} \\
    &= \langle \mathcal{P}_{\mathcal{M}}\bphi, \calL_{i}(\Phi(\cdot,\calL_i(\bphi)-2\tilde{\by}_i)) \rangle_{\calH_K} \\
    &= \langle \calL_i(\mathcal{P}_{\mathcal{M}}\bphi), \calL_i(\bphi) \rangle_{2} - 2\langle \calL_i(\mathcal{P}_{\mathcal{M}}\bphi), \tilde{\by}_i \rangle_{2},
  \end{align*}
  since $\calL_{i}(\Phi(\cdot,\calL_i(\bphi)-2\tilde{\by}_i))\in\mathcal{M}$. Using the same approach as the above, we can also get $\langle \calL_i(\mathcal{P}_{\mathcal{M}}\bphi), \calL_i(\bphi) \rangle_{2}=\langle \calL_i(\mathcal{P}_{\mathcal{M}}\bphi), \calL_i(\mathcal{P}_{\mathcal{M}}\bphi) \rangle_{2}$. Therefore, we obtain 
  \begin{equation*}
    \|\calL_i(\bphi)-\tilde{\by}_i\|_{2}^2  = \|\calL_i(\mathcal{P}_{\mathcal{M}}\bphi)-\tilde{\by}_i\|_{2}^2 .
  \end{equation*}
  If $\bphi^{*}\notin\mathcal{M}$, using the above equality, we have
  \begin{align*}
    J(\bphi^{*}) 
    &= \sum_{i=1}^{N}\|\calL_i(\mathcal{P}_{\mathcal{M}}\bphi^{*})-\tilde{\by}_i\|_{2}^2 + \lambda\|\mathcal{P}_{\mathcal{M}}\bphi^{*}+(I-\mathcal{P}_{\mathcal{M}})\bphi^{*}\|_{\calH_K}^2 \\
    &= J(\mathcal{P}_{\mathcal{M}}\bphi^{*}) + \lambda\|(I-\mathcal{P}_{\mathcal{M}})\bphi^{*}\|_{\calH_K}^2 \\
    &> J(\mathcal{P}_{\mathcal{M}}\bphi^{*}) ,
  \end{align*}
  contradictory with that $\bphi^{*}$ is a minimizer. In other words, $\bphi^{*}$ must have the finite expansion form \cref{expan_new}.
\end{proof}

Note that \cref{loss_1} and \cref{vrkhs_regu} are identical by setting $\tilde{\by}_i=\by_i-\bx_0$. The Integral-form Representer Theorem (IRT) allows us to get a finite-dimensional optimization problem from \cref{loss_1}. We can further show that the solution of \cref{loss_1} is unique.

\begin{corollary}
  The regularization problem \cref{loss_1} has a unique solution, which has the expression \cref{expan_new} with $\bv=(v_{1}^{\top},\dots,v_{n}^{\top})^{\top}\in\mathbb{R}^{nd}$ be an arbitrary solution of 
  \begin{equation}\label{ls_1}
  \min_{\bv\in\mathbb{R}^{nd}} \|\mathbf{G}\mathbf\bv-\bb\|_{2}^{2} + \lambda \bv^{\top}\mathbf{G}\bv ,
  \end{equation}
  where 
  \begin{equation}\label{mat_vec}
  \mathbf{G}=\begin{pmatrix}
    G_{11} & \cdots & G_{1n} \\
    \vdots & \ddots & \vdots \\
    G_{n1} & \cdots & G_{nn}
  \end{pmatrix}, \quad
  \bb = \begin{pmatrix}
    b_1 \\ \vdots \\ b_n
  \end{pmatrix} 
  \end{equation}
  with $b_i=\by_i-\bx_0$ and $G_{ij}=\int_{0}^{t_i}\int_{0}^{t_j}K(s,t)\rd s\rd t$.
\end{corollary}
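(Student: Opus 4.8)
The plan is to substitute the finite expansion guaranteed by the Integral-form Representer Theorem into the objective and reduce \cref{loss_1} to the finite-dimensional problem \cref{ls_1}, then extract uniqueness from strict convexity. Throughout I identify \cref{loss_1} with \cref{vrkhs_regu} via $\tilde{\by}_i = \by_i - \bx_0 = b_i$, so that \Cref{repre_new} and \Cref{lem:functional_reproducing} apply directly.

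First I would settle uniqueness at the level of $\calH_K$. Existence of a minimizer $\bphi^*$ is already supplied by \Cref{repre_new}. Each map $\bphi \mapsto \calL_i(\bphi) - \tilde{\by}_i$ is affine and the squared Euclidean norm is convex, so the data-fitting term is convex in $\bphi$; since $\lambda > 0$, the penalty $\bphi \mapsto \lambda\|\bphi\|_{\calH_K}^2$ is strictly convex by the parallelogram law. Hence $J$ is strictly convex on $\calH_K$ and admits at most one minimizer, so $\bphi^*$ is unique.

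Next I would carry out the reduction. By \Cref{repre_new} write $\bphi^* = \sum_{i=1}^{n}\calL_i(\Phi(\cdot, v_i))$ and stack the coefficients into $\bv$. Applying $\calL_j$ and using the definition \cref{int_op}, a double integration gives $\calL_j(\bphi^*) = \sum_{i=1}^{n} G_{ji} v_i$, so that the stacked vector $(\calL_1(\bphi^*),\dots,\calL_n(\bphi^*))$ equals $\mathbf{G}\bv$ and the data term becomes $\|\mathbf{G}\bv - \bb\|_2^2$. For the penalty I would expand $\|\bphi^*\|_{\calH_K}^2 = \sum_{i,j}\langle \calL_i(\Phi(\cdot, v_i)), \calL_j(\Phi(\cdot, v_j))\rangle_{\calH_K}$ and apply \Cref{lem:functional_reproducing} with $\bphi = \calL_i(\Phi(\cdot, v_i))$ and $v = v_j$ to rewrite each inner product as $\langle \calL_j(\calL_i(\Phi(\cdot, v_i))), v_j\rangle_2 = v_j^{\top} G_{ji} v_i$, whence $\|\bphi^*\|_{\calH_K}^2 = \bv^{\top}\mathbf{G}\bv$ after relabelling $i \leftrightarrow j$. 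Substituting both identities yields $J(\bphi^*) = \|\mathbf{G}\bv - \bb\|_2^2 + \lambda\,\bv^{\top}\mathbf{G}\bv$, which is exactly the objective of \cref{ls_1}. Because \emph{every} minimizer of $J$ has the expansion \cref{expan_new}, minimizing over $\calH_K$ is equivalent to minimizing over $\bv \in \mathbb{R}^{nd}$, and any minimizing $\bv$ reconstructs the unique $\bphi^*$ through \cref{expan_new}.

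The main obstacle will be the bookkeeping in the double-integral computations: I must track which of the two arguments of the matrix kernel $K$ each operator $\calL$ acts on, and confirm that the nested operator $\calL_j(\calL_i(\Phi(\cdot, v_i)))$ produces precisely the block $G_{ji}v_i$. Matching the resulting quadratic form to $\bv^{\top}\mathbf{G}\bv$ relies on the symmetry $G_{ij}^{\top} = G_{ji}$, which follows from $K(s,t) = K(t,s)^{\top}$ together with Fubini's theorem; verifying these index and transpose conventions is the only delicate point, while the strict-convexity argument and the substitution itself are routine.
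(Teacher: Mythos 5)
Your proposal is correct, and the finite-dimensional reduction is exactly the paper's: substitute the expansion from \Cref{repre_new}, apply \Cref{lem:functional_reproducing} to turn the data term into $\|\mathbf{G}\bv-\bb\|_{2}^{2}$ and the penalty into $\bv^{\top}\mathbf{G}\bv$; your bookkeeping $\calL_j\bigl(\calL_i(\Phi(\cdot,v_i))\bigr)=G_{ji}v_i$ is right, and in fact the relabelling $i\leftrightarrow j$ alone matches the quadratic form, so the symmetry $G_{ij}^{\top}=G_{ji}$ you flag as the delicate point is not even needed there. Where you genuinely depart from the paper is uniqueness. You obtain it abstractly and up front: the data term is convex as a composition of convex functions with the linear maps $\calL_i$, the penalty $\lambda\|\cdot\|_{\calH_K}^{2}$ is strictly convex in a Hilbert space, so $J$ has at most one minimizer; consequently any solution $\bv$ of \cref{ls_1} produces a minimizer of $J$, which must coincide with the unique $\bphi^{*}$. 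The paper instead argues at the level of coefficients: two solutions $\bv,\bv'$ of \cref{ls_1} satisfy $\bv-\bv'\in\mathcal{N}(\mathbf{G})$, whence $\|\bphi-\bphi'\|_{\calH_K}^{2}=(\bv-\bv')^{\top}\mathbf{G}(\bv-\bv')=0$ and the two representations define the same function. Your route is shorter and avoids analyzing the normal equations and the null space of $\mathbf{G}$; the paper's route makes explicit the structural fact that both arguments must accommodate, namely that when $\mathrm{rank}(\mathbf{G})<nd$ the coefficient vector is genuinely non-unique even though the function it represents is unique. Either argument yields the corollary as stated.
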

\begin{proof}
  By the IRT, suppose $\bphi = \sum_{i=1}^{n}\calL_i(\Phi(\cdot,v_i))$ be a minimizer of \cref{loss_1}. Then 
\begin{align*}
  \bx_0+\int_{0}^{t_i}\bphi(s)\rd s - \by_i
  &= \int_{0}^{t_i}\left(\sum_{j=1}^{n}\int_{0}^{t_j}K(s,t)v_j\rd t \right)\rd s - (\by_i-\bx_0) \\
  &= \sum_{j=1}^{n}\int_{0}^{t_i}\int_{0}^{t_j}K(s,t)\rd s\rd t \cdot v_j - (\by_i-\bx_0) .
\end{align*}
Let $\mathbf{G}$ and $\bb$ be defined in \cref{mat_G1}. Then we have
\begin{equation}
  \sum_{i=1}^{n}\|\bx_0+\int_{0}^{t_i}\bphi(s)\rd s - \by_i\|_{2}^2
  = \|\mathbf{G}\bv-\bb\|_{2}^{2} .
\end{equation}
Now we derive the expression of $\|\bphi\|_{\calH_K}^2$. Using \Cref{lem:functional_reproducing} we have
\begin{align*}
  \|\bphi\|_{\calH_K}^2
  &= \left\langle \sum_{i=1}^{n}\calL_i(\Phi(\cdot,v_i)), \sum_{j=1}^{n}\calL_j(\Phi(\cdot,v_j)) \right\rangle_{\calH_K} \\
  &= \sum_{i,j=1}^{n}\langle \calL_i\calL_j(\Phi(\cdot,v_j)),v_i \rangle_{2} \\
  &= \sum_{i,j=1}^{n} v_{i}^{\top} \left(\int_{0}^{t_i}\int_{0}^{t_j}K(s,t)\rd s\rd t \right) v_{j} \\
  & = \bv^{\top}\mathbf{G}\bv .
\end{align*}
Therefore, the coefficient vector must be a solution of the least squares problem \cref{ls_1}.

Note that there may be multiple solutions of \cref{ls_1} if $\mathrm{rank}(G)<nd$. Now we prove that even for this case, these different minimizers will lead to the same solution of \cref{loss_1}. Suppose $\bv$ and $\bv'=(v_{1}'^{\top},\dots,v_{n}'^{\top})^{\top}\in\mathbb{R}^{nd}$ are two solutions of \cref{ls_1} and the corresponding solutions to \cref{loss_1} are $\bphi$ and $\bphi'=\sum_{i=1}^{n}\calL_i(\Phi(\cdot,v_{i}'))$, respectively. Then it holds that $\bw:=\bv-\bv'\in\mathcal{N}(\mathbf{G})$, the null space of $\mathbf{G}$. Note that 
\begin{equation*}
  \bphi-\bphi'= \sum_{i=1}^{n}(\calL_i(\Phi(\cdot,v_{i}))-\calL_i(\Phi(\cdot,v_{i}')))
  =\sum_{i=1}^{n}\calL_i(\Phi(\cdot,v_{i}-v_{i}')).
\end{equation*}
Using the same procedure as the above for deriving the expression of $\|\bphi\|_{\calH_K}^2$, we have
\begin{equation*}
  \|\bphi-\bphi'\|_{\calH_K}^2 = (\bv-\bv')^{\top}\mathbf{G}(\bv-\bv')=0.
\end{equation*}
Therefore, it must hold that $\bphi=\bphi'$.
\end{proof}

In the next section, we will discuss the choice of the kernel $K$ for practical computations. With an appropriate selection of $K$, a unique solution to \cref{ls_1} exists, and we will also present an efficient method for estimating the optimal value of $\lambda$.

\section{Practical computations for fitting derivative and trajectory simultaneously}\label{sec4}
We show how to choose an appropriate kernel $K$ to avoid the need for integral computations when forming $\mathbf{G}$. Additionally, we provide an efficient approach for determining an optimal regularization parameter $\lambda$ without solving \cref{ls_1} multiple times.

\subsection{Choice of kernels}
In practical computation, the most common choice of an $\mathbb{R}^{d}$-valued kernel can be a separable kernel $K(s,t)=k(s,t)A$, where $k(s,t)\in\mathbb{R}$ is a symmetric positive definite kernel function and $A\in\mathbb{R}^{d\times d}$ is a symmetric positive definite matrix. Without loss of generality, it can also be written as $K(s,t)=k(s,t)I_{d}$ since $A$ can be ``absorbed'' in the coefficients $\{v_i\}$. 
The following result describes the structure of an $\mathbb{R}^{d}$-valued RKHS with a separable kernel.

\begin{proposition}\label{prop:sepa_kernel}
  Let $X=[0,T]$. Let $k(t,t'):X\times X\rightarrow\mathbb{R}$ be a real-valued reproducing kernel with the corresponding RKHS $\calH_k$. Then $K(t,t'):=k(t,t')I_d:X\times X\rightarrow\mathbb{R}^{d}$ is an $\mathbb{R}^{d}$-valued reproducing kernel, and the corresponding $\mathbb{R}^{d}$-valued RKHS is 
\begin{equation}
  \calH_K = \underbrace{H_k \otimes \cdots \otimes H_k}_{d},
\end{equation}
where the inner product is $\langle \boldsymbol{g}, \boldsymbol{g}' \rangle_{\calH_K}=\sum_{i=1}^{d}\langle g_i, g_{i}' \rangle_{\calH_k}$ for any $\boldsymbol{g}=(g_{1},\dots,g_{d})^{\top}$ and $\boldsymbol{g}'=(g_{1}',\dots,g_{d}')^{\top}$.
\end{proposition}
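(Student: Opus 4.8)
The plan is to exhibit a concrete candidate Hilbert space, show directly that it is an $\mathbb{R}^d$-valued RKHS whose reproducing kernel is exactly $K(t,t')=k(t,t')I_d$, and then appeal to the uniqueness part of the Moore--Aronszajn theorem to conclude that this candidate must coincide with $\calH_K$. Concretely, let $\mathcal{G}$ denote the space of maps $\boldsymbol{g}=(g_1,\dots,g_d)^{\top}:X\to\mathbb{R}^d$ with each $g_l\in\calH_k$, endowed with the inner product $\langle\boldsymbol{g},\boldsymbol{g}'\rangle_{\mathcal{G}}:=\sum_{l=1}^{d}\langle g_l,g_l'\rangle_{\calH_k}$; this is the space written as $\calH_k\otimes\cdots\otimes\calH_k$ in the statement. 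The whole proof reduces to verifying three things about $\mathcal{G}$ and $K$, each of which descends from the corresponding scalar fact about $k$ and $\calH_k$.

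First I would confirm that $K$ is a legitimate $\mathbb{R}^d$-valued reproducing kernel by checking the two defining conditions. Symmetry $K(t,t')=K(t',t)^{\top}$ is immediate from $k(t,t')=k(t',t)$ together with $I_d^{\top}=I_d$. For positive definiteness, given $t_1,\dots,t_N$ and $z_1,\dots,z_N\in\mathbb{R}^d$, I would expand $\langle K(t_i,t_j)z_i,z_j\rangle_2=k(t_i,t_j)\,z_j^{\top}z_i$ and regroup the double sum across components: writing $z_i=(z_i^{(1)},\dots,z_i^{(d)})^{\top}$, the total equals $\sum_{l=1}^{d}\sum_{i,j}k(t_i,t_j)z_i^{(l)}z_j^{(l)}$, and each inner sum is nonnegative since $k$ is positive definite.

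Next I would note that $\mathcal{G}$, being a finite direct sum of copies of the Hilbert space $\calH_k$, is itself a Hilbert space with $\mathcal{G}\subseteq(\mathbb{R}^d)^X$, and then verify the two conditions of \Cref{def:vrkhs}. For condition (1), the map $t\mapsto K(t,t')v=k(t,t')v$ has $l$-th component $v_l\,k(\cdot,t')$, which lies in $\calH_k$ because $k(\cdot,t')\in\calH_k$; hence $K(\cdot,t')v\in\mathcal{G}$. For the reproducing property (2), using the scalar reproducing property of $k$ in each component yields
\[
\langle \boldsymbol{g}, K(t,\cdot)v\rangle_{\mathcal{G}}
= \sum_{l=1}^{d}\langle g_l, v_l\,k(\cdot,t)\rangle_{\calH_k}
= \sum_{l=1}^{d} v_l\, g_l(t)
= \langle \boldsymbol{g}(t), v\rangle_2 .
\]

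Since both conditions hold, $\mathcal{G}$ is an $\mathbb{R}^d$-valued RKHS with kernel $K$, and the Moore--Aronszajn theorem then forces $\calH_K=\mathcal{G}$. I do not anticipate a genuine obstacle: the argument is a verification rather than a construction. The only points demanding care are the bookkeeping across components in the positive-definiteness check and keeping the arguments of $k$ in the correct order when applying its symmetry inside the reproducing-property computation; everything else is inherited from the scalar theory of $\calH_k$.
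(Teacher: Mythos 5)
Your proposal is correct and follows essentially the same route as the paper's own proof: exhibit the componentwise space $\calH_k\otimes\cdots\otimes\calH_k$ with the stated inner product, verify the two conditions of \Cref{def:vrkhs} componentwise, and invoke the Moore--Aronszajn uniqueness to conclude $\calH_K$ equals this space. The only difference is that you spell out the symmetry and positive-definiteness of $K$ (which the paper dismisses as ``easy to verify''), a welcome but minor addition.
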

\begin{proof}
  Write $\calH=\underbrace{H_k \otimes \cdots \otimes H_k}_{d}$ with the inner product defined as the above. Then $\calH$ is a Hilbert space. It is easy to verify that $K$ is a reproducing kernel. By the Moore-Aronszajn Theorem, we only need to verify that $\calH$ and $K$ satisfy the two properties in \cref{def:vrkhs}:
  \begin{enumerate}
    \item[(1).] For a fixed $t'\in X$ and $v\in\mathbb{R}^{d}$, consider the map $\boldsymbol{\phi}(t)=K(t,t')v=k(t,t')\boldsymbol{v}$ where $t\in X$ and $\boldsymbol{v}=(v_1,\dots,v_d)^{\top}\in\mathbb{R}^{d}$. If follows that $\boldsymbol{\phi}(t)=(k(t,t')v_1,\dots,k(t,t')v_d)^{\top}$. Since $k(t,t')v_i\in\calH_{k}$ for $i=1,\dots,d$, we have $\boldsymbol{\phi}(t)\in\calH$.
    \item[(2).] For any $t\in X$, $v\in\mathbb{R}^{d}$ and $\boldsymbol{\phi}\in\calH$, we have
    \begin{align*}
      \langle \boldsymbol{\phi}, K(t,\cdot)v\rangle_{\calH} =
      \langle \boldsymbol{\phi}, k(t,\cdot)v\rangle_{\calH}
      =\sum_{i=1}^{d} \langle \phi_i, k(t,\cdot)v_i\rangle_{\calH} 
      = \sum_{i=1}^{d} \phi_i v_i
      = \langle\boldsymbol{\phi}(x),v\rangle_2.
    \end{align*}
  \end{enumerate}
  Therefore, we get $\calH_K=\calH$.
\end{proof}

The following result shows that if $k(t,t')$ is a strictly positive definite kernel, then we can make sure that \cref{ls_1} must have a unique solution. For more discussions about strictly positive definite kernels, see \cite[Chapter 6]{wendland2004scattered}.
\begin{theorem}\label{thm:spd}
  Following the notations in \Cref{prop:sepa_kernel}, assume that $k(t,t')$ is a continuous and strictly positive definite kernel, which means that 
  \begin{equation*}
    \sum_{i,j=1}^{n}\alpha_{i}\alpha_jk(t_i,t_j) > 0
  \end{equation*}
  for any integer $n\geq 1$ and $(\alpha_1,\dots,\alpha_{n})\neq \mathbf{0}$. Then the matrix $\mathbf{G}\in\mathbb{R}^{nd\times nd}$ in \cref{mat_vec} is symmetric positive definite.
\end{theorem}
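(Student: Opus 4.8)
The plan is to exploit the separable structure $K(s,t)=k(s,t)I_d$ to reduce the claim about the $nd\times nd$ matrix $\mathbf{G}$ to a claim about an $n\times n$ scalar matrix. Since
\begin{equation*}
  G_{ij}=\int_{0}^{t_i}\int_{0}^{t_j}k(s,t)I_d\,\rd s\,\rd t = g_{ij}I_d, \qquad g_{ij}:=\int_{0}^{t_i}\int_{0}^{t_j}k(s,t)\,\rd s\,\rd t,
\end{equation*}
the block matrix $\mathbf{G}$ is exactly the Kronecker product $\tilde{G}\otimes I_d$, where $\tilde{G}=(g_{ij})_{1\le i,j\le n}$. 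The eigenvalues of $\tilde{G}\otimes I_d$ are those of $\tilde{G}$, each repeated $d$ times, so it suffices to prove that $\tilde{G}\in\mathbb{R}^{n\times n}$ is symmetric positive definite; symmetry follows at once from $k(s,t)=k(t,s)$ and Fubini's theorem.

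Next I would express the quadratic form analytically. For $\boldsymbol{c}=(c_1,\dots,c_n)^{\top}$ introduce the step function $h_{\boldsymbol{c}}(s):=\sum_{i=1}^{n}c_i\,\chi_{[0,t_i]}(s)$, so that
\begin{equation*}
  \boldsymbol{c}^{\top}\tilde{G}\boldsymbol{c}=\sum_{i,j=1}^{n}c_ic_j\int_{0}^{t_i}\int_{0}^{t_j}k(s,t)\,\rd s\,\rd t=\int_{0}^{T}\int_{0}^{T} h_{\boldsymbol{c}}(s)\,h_{\boldsymbol{c}}(t)\,k(s,t)\,\rd s\,\rd t .
\end{equation*}
Because $k$ is a continuous positive definite kernel this double integral is nonnegative (it is a limit of Riemann sums, each a nonnegative quadratic form in $k$), so $\tilde{G}$ is at least positive semidefinite. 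Equivalently, the scalar ($d=1$) instance of \Cref{lem:functional_reproducing} gives $g_{ij}=\langle\eta_i,\eta_j\rangle_{\calH_k}$ with $\eta_i:=\int_{0}^{t_i}k(s,\cdot)\,\rd s\in\calH_k$, so $\tilde{G}$ is the Gram matrix of $\eta_1,\dots,\eta_n$ in the scalar RKHS $\calH_k$ of \Cref{prop:sepa_kernel}. Positive definiteness is therefore equivalent to the linear independence of the $\eta_i$, i.e.\ to the injectivity of the integral operator $(T_kh)(t):=\int_{0}^{T}h(s)k(s,t)\,\rd s$ on the span of the functions $\{\chi_{[0,t_i]}\}$.

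The main obstacle is this final strict positivity. Note first that $\boldsymbol{c}\neq\mathbf{0}$ forces $h_{\boldsymbol{c}}\neq0$ in $L^2$, since the jumps of $h_{\boldsymbol{c}}$ at the points $t_i$ recover the coefficients $c_i$; the task is to show that such a nonzero step function cannot be annihilated by $T_k$. I would attack this through Mercer's theorem: writing $k(s,t)=\sum_{m}\mu_m e_m(s)e_m(t)$ with $\mu_m\ge0$ and orthonormal continuous $e_m$, the quadratic form equals $\sum_{m}\mu_m\langle h_{\boldsymbol{c}},e_m\rangle_{L^2}^2$, which vanishes only if $h_{\boldsymbol{c}}$ is $L^2$-orthogonal to every eigenfunction with $\mu_m>0$. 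The crux is to preclude this using strict positive definiteness, that is, to show the closed span of the positive-eigenvalue eigenfunctions contains the step functions. The delicate point is that the naive Riemann-sum lower bound $\Delta^2\,\alpha^{\top}(k(s_p,s_q))\alpha$ degenerates as the mesh $\Delta\to0$, so strict positivity of the finite kernel matrices does not transfer to the limit directly; the argument must instead route through completeness of the Mercer system. For the kernels of practical interest here, such as the Gaussian and Mat\'ern kernels, $T_k$ is injective on all of $L^2$ (these kernels are integrally strictly positive definite), which makes this last step immediate and confirms that $\tilde{G}$, and hence $\mathbf{G}=\tilde{G}\otimes I_d$, is positive definite.
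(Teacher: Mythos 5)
Your preliminary reductions are correct and essentially coincide with the paper's: the Kronecker factorization $\mathbf{G}=G_1\otimes I_d$ (the paper's $G_1$ in \cref{mat_G1} is your $\tilde G$), symmetry via Fubini, positive semidefiniteness, and the identification of $G_1$ as the Gram matrix of the functions $\eta_i=\int_0^{t_i}k(s,\cdot)\,\rd s$ in $\calH_k$, so that positive definiteness of $\mathbf{G}$ is equivalent to linear independence of $\eta_1,\dots,\eta_n$. The paper reaches the same reduction by writing $\bv^{\top}\mathbf{G}\bv=\|\bphi\|_{\calH_K}^{2}$ for $\bphi=\sum_{i}\calL_i(\Phi(\cdot,v_i))$ and asking when $\bphi=\mathbf{0}$ forces $\bv=\mathbf{0}$.

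The genuine gap is in the one step that carries the weight of the theorem: deducing linear independence of the $\eta_i$ from the \emph{stated} hypothesis, namely strict positive definiteness in the finite-point sense $\sum_{i,j}\alpha_i\alpha_j k(t_i,t_j)>0$. Your Mercer route needs the null space of the integral operator $T_k$ to contain no nonzero step function, i.e., a (restricted) form of \emph{integral} strict positive definiteness. As you yourself observe, this does not follow from the finite-point property by passing to limits of Riemann sums --- the finite lower bounds degenerate as the mesh shrinks --- and your resolution is to assume instead that $T_k$ is injective on $L^2$ (``integrally strictly positive definite''). That is a strictly stronger hypothesis than the one in the theorem: it holds for the Gaussian and Mat\'{e}rn kernels, so your argument does cover the kernels the paper actually uses in \Cref{sec4}, but it does not prove the theorem for the class of kernels stated; for a general continuous, finitely-SPD kernel your proof leaves open exactly the possibility it was supposed to exclude, namely a nonzero $h_{\boldsymbol{c}}$ annihilated by $T_k$. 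The paper closes this step without ever leaving the finite-point setting: it writes $\sum_i c_i\eta_i=\sum_j\bigl(\sum_{i=j}^{n}c_i\bigr)\int_{t_{j-1}}^{t_j}k(s,\cdot)\,\rd s$, invokes the mean value theorem for integrals to replace each $\int_{t_{j-1}}^{t_j}k(s,\cdot)\,\rd s$ by $(t_j-t_{j-1})\,k(\xi_j,\cdot)$, and then applies strict positive definiteness only at the finitely many points $\xi_1,\dots,\xi_n$ (through the linear-independence characterization in \cite[Theorem 2.1]{wu2024characterizations}), plus a second finite point set to force $v_i=\mathbf{0}$. (That MVT step itself requires care, since a priori $\xi_j$ depends on the evaluation point in the second argument; but the structural point stands: the paper consumes the hypothesis at finitely many points, which is the only form in which it is available.) To repair your proof you would either have to show that finite-point strict positive definiteness implies injectivity of $T_k$ on $\mathrm{span}\{\chi_{[0,t_i]}\}$ --- which is the entire content of the theorem --- or replace the Mercer step by a finite-point argument of this kind.
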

\begin{proof}
  It is easy to verify that $\mathbf{G}$ is symmetric, because
  \begin{equation*}
    G_{ji}^{\top} = \int_{0}^{t_j}\int_{0}^{t_i}K(s,t)^{\top}\rd s\rd t
    = \int_{0}^{t_i}\int_{0}^{t_j}K(t,s)\rd t \rd s
    =G_{ij}.
  \end{equation*}
  From the derivation of the expression of $\|\bphi\|_{\calH_K}^2$, we know that for $\bv^{\top}\mathbf{G}\bv\geq 0$ for any $\bv\in\mathbb{R}^{nd}$, which implies that $\mathbf{G}$ is symmetric positive semi-definite. 
  
  To prove the strict positiveness, suppose $\bv^{\top}\mathbf{G}\bv=0$ for some $\bv\in\mathbb{R}^{nd}$ and write $\bphi:= \sum_{i=1}^{n}\calL_i(\Phi(\cdot,v_i))$ where $\bv=(v_{1}^{\top},\dots,v_{n}^{\top})^{\top}$ with $v_{i}\in\mathbb{R}^{d}$. Then $\|\bphi\|_{\calH_K}^2=0$, leading to $\bphi = \mathbf{0}$, that is
\begin{equation}\label{equal1}
  \mathbf{0}=\sum_{i=1}^{n}\int_{0}^{t_i}K(s,\cdot)\rd s \cdot v_i = \sum_{i=1}^{n}\int_{0}^{t_i}k(s,\cdot)\rd s \cdot v_i .
\end{equation}
Note that $\int_{0}^{t_i}k(s,\cdot)$ is a scalar-valued function. Now we prove that $\{\int_{0}^{t_i}k(s,\cdot)\}_{i=1}^{n}$ are linear independent functions. Suppose there exist real numbers $c_{1},\dots,c_{n}$ such that 
\begin{equation*}
  0 = \sum_{i=1}^{n}c_i\int_{0}^{t_i}k(s,\cdot)\rd s = \sum_{i=1}^{n}\sum_{j=1}^{i}\int_{t_{j-1}}^{t_j}k(s,\cdot)\rd s
  = \sum_{j=1}^{n}\left(\sum_{i=j}^{n}c_{i}\right)\int_{t_{j-1}}^{t_j}k(s,\cdot)\rd s,
\end{equation*}
where $t_{0}=0$. By the Mean Value Theorem for Integrals, there exist $\xi_{i}\in(t_{j-1},t_{j})$ for $j=1,\dots,n$ such that $\int_{t_{j-1}}^{t_j}k(s,\cdot)\rd s=(t_{j}-t_{j-1})k(\xi_{j},\cdot)$. Since $k(\cdot,\cdot)$ is strictly positive definite, it follows that the matrix $(k(\xi_i,\xi_{j}))_{1\leq i,j\leq n}$ is positive definite, and thereby the $n$ vectors $\{(k(\xi_{i},\xi_j))_{1\leq j\leq n}\}_{i=1}^{n}$ are linear independent. Using \cite[Theorem 2.1]{wu2024characterizations}, it follows that $\{k(\xi_{i},\cdot)\}_{i=1}^{n}$ are linear independent functions. Therefore, we have $\sum_{i=j}^{n}c_{i}=0$ for any $1\leq j\leq n$, i.e.
\begin{equation*}
  \mathbf{0}= (c_1,\dots,c_{n})\begin{pmatrix}
    1 & &  \\
    \vdots & \ddots & \\
    1  & \cdots & 1
  \end{pmatrix} ,
\end{equation*}
leading to $c_1=\cdots=c_n=0$. This proves that $\{\int_{0}^{t_i}k(s,\cdot)\rd s\}_{i=1}^{n}$ are linear independent. Using \cite[Theorem 2.1]{wu2024characterizations} again, we can find $n$ difference points $\{s_{i}\}_{i=1}^{n}\subset [0,T]$ such that the $n$ vectors $\{(\int_{0}^{t_i}k(s,s_j)\rd s)_{1\leq j\leq n}\}_{i=1}^{n}$ are linear independent. Now \cref{equal1} implies that
\begin{equation*}
  \mathbf{0} = (v_{1},\cdots,v_{n})\begin{pmatrix}
    \int_{0}^{t_1}k(s,s_1)\rd s & \cdots & \int_{0}^{t_1}k(s,s_n)\rd s \\
    \vdots & \cdots & \vdots \\
    \int_{0}^{t_n}k(s,s_1)\rd s & \cdots & \int_{0}^{t_n}k(s,s_n)\rd s
  \end{pmatrix}.
\end{equation*}
Since the above matrix is nonsingular, it follows that $v_{i}=\mathbf{0}$ for $1\leq i\leq n$. This proves that $\mathbf{G}$ is strictly positive define.
\end{proof}

By \Cref{thm:spd}, if we choose a separable kernel $K(s,t)=k(s,t)I_{d}$ with a strictly positive definite $k(t,t')$, then $\mathbf{G}$ is positive definite, thereby the unique minimizer of \cref{ls_1} is
\begin{equation*}
  \bv_{\lambda}=(\mathbf{G}^{2}+\mathbf{G})^{-1}\mathbf{G}\bb = (\mathbf{G}+\lambda I_{nd})^{-1}\bb .
\end{equation*}
Moreover, $\mathbf{G}$ is a block diagonal matrix, which has the form
\begin{equation*}
  \mathbf{G}=G_{1}\otimes I_d
\end{equation*}
with
\begin{equation}\label{mat_G1}
  G_1=\begin{pmatrix}
    \int_{0}^{t_1}\int_{0}^{t_1}k(s,t)\rd s\rd t & \cdots & \int_{0}^{t_1}\int_{0}^{t_n}k(s,t)\rd s\rd t \\
    \vdots & \ddots & \vdots \\
    \int_{0}^{t_n}\int_{0}^{t_1}k(s,t)\rd s\rd t & \cdots & \int_{0}^{t_n}\int_{0}^{t_n}k(s,t)\rd s\rd t
  \end{pmatrix} .
\end{equation}
In this case, we have
\[(\mathbf{G}+\lambda I_{nd})^{-1} = [G_1\otimes I_d + \lambda I_n \otimes I_{d}]^{-1} = 
[(G_{1}+\lambda I_{n})\otimes I_{d}]^{-1} = (G_{1}+\lambda I_{n})^{-1} \otimes I_{d} . \]
Let $\mathbf{B}=(b_1,\dots,b_n)\in\mathbb{R}^{d\times n}$. Then we have 
\begin{equation}\label{vec_v}
  \bv_{\lambda} = [(G_{1}+\lambda I_{n})^{-1} \otimes I_d] \bb 
  = \rvec(\mathbf{B}(G_{1}+\lambda I_{n})^{-1}),
\end{equation}
where $\mathrm{vec}(\cdot)$ vectorizes a matrix by stacking its columns from left to right into a single column vector. Write $\bv_{\lambda}=(v_{\lambda,1}^{\top},\dots,v_{\lambda,n}^{\top})^{\top}$. We eventually obtain the solution of \cref{loss_1}, which is the estimated derivative function from the noisy data:
\begin{equation}\label{est_deriv}
  \dot{\bx}(t) \approx \bphi^{*}(t) = \sum_{j=1}^{n}\int_{0}^{t_j}K(s,t)v_{\lambda,j}\rd s
  = \sum_{j=1}^{n}\left(\int_{0}^{t_j}k(s,t)\rd s \right) v_{\lambda,j} .
\end{equation}

To estimate the values of $\dot{\bx}(t)$ at $\{t_{i}\}_{i=1}^{n}$, define $\psi_{j}(t_i)=\int_{0}^{t_j}k(s,t)\rd s$. It follows from \cref{est_deriv} that $\dot{\bx}(t_i)\approx\sum_{j=1}^{n}\psi_{j}(t_i)v_j$, and thereby
\begin{equation}\label{deriv_approx}
  \begin{pmatrix}
    \dot{\bx}(t_1) & \cdots  & \dot{\bx}(t_n)
  \end{pmatrix} \approx
  \begin{pmatrix}
    v_{\lambda,1} & \cdots & v_{\lambda,n}
  \end{pmatrix}
  \begin{pmatrix}
    \psi_{1}(t_1) & \cdots & \psi_{1}(t_n) \\
    \vdots & \ddots & \vdots \\
    \psi_{n}(t_1) & \cdots & \psi_{n}(t_n)
  \end{pmatrix} =: \mathbf{V}_{\lambda}\Psi ,
\end{equation}
where it follows from \cref{vec_v} that $\mathbf{V}_{\lambda}=\mathbf{B}(G_{1}+\lambda I_{n})^{-1}$. 
To estimate the values of $\bx(t)$ at $\{t_{i}\}_{i=1}^{n}$, notice that 
\begin{align*}
  \bx(t_i) &= \bx_0 + \int_{0}^{t_i}\dot{\bx}(t) \rd t \\
  &\approx \bx_0 + \int_{0}^{t_i}\left(\sum_{j=1}^{n}\int_{0}^{t_j}k(s,t)v_{\lambda,j}\rd s \right)\rd t \\
  &=  \bx_0 + \sum_{j=1}^{n}\left(\int_{0}^{t_i}\int_{0}^{t_j}k(s,t)\rd s\rd t \right) v_{\lambda,j} .
\end{align*}
Therefore, we obtain
\begin{equation}\label{traj_approx}
  \begin{pmatrix}
    \bx(t_1) & \cdots  & \bx(t_n)
  \end{pmatrix} \approx 
  \begin{pmatrix}
    \bx_0 & \cdots & \bx_0
  \end{pmatrix} + V_{\lambda}G_1 .
\end{equation}

During the computation, in order to form the Gram matrix $\mathbf{G}$ and basis functions $\psi_{j}(t)$, more than $n^2$ integrals with integrand $k(t,t')$ should be computed. However, by selecting proper kernel functions, analytical expressions for these integrals can be obtained, allowing the costly numerical integrations to be avoided. For the kernel function $k(s,t)$, it should be efficient to compute $\int_{0}^{t_i}\int_{0}^{t_j}k(s,t)\rd s\rd t$ and $\psi_{j}(t_i)=\int_{0}^{t_j}k(s,t)\rd s$. We exploit two types of commonly used kernel functions---the Gaussian kernel and the Mat\'{e}rn kernel. Both of them are strictly positive definite kernels; see \cite[Theorem 6.11]{wendland2004scattered} for how to check whether it is a strictly positive definite kernel.

The Gaussian kernel function
\begin{equation}
  k(s,t) = \exp\left(-\frac{(s-t)^2}{2l^2}\right)
\end{equation}
with $l$ a hyperparameter is a very common choice for fitting continuous curves; for the structure of the RKHS induced by the Gaussian kernel, see \cite[Theorem 5.20]{wendland2004scattered}. The parameter $l$ determines the length scale of the associated hypothesis space of functions. As $l$ increases, the induced functions change less rapidly, and thus get ``smoother". Although the analytic form of the antiderivative of this function can not be expressed with elementary functions, we can still quickly evaluate the value of its integral with the help of special functions. Using the error function
\[\mathrm{erf}(x) = \frac{2}{\sqrt{\pi}}\int_{0}^{x}e^{-t^2}\rd t ,\ \ \ x\in\mathbb{R}  \]
and its antiderivative 
\[h(x):=\int \mathrm{erf}(x)\rd x = x\cdot\mathrm{erf}(x)+\frac{e^{-x^2}}{\sqrt{\pi}} + c , \]
we have 
\begin{equation*}
  \int_{0}^{t_j}k(s,t)\rd s = 
  \frac{\sqrt{2\pi}l}{2}\left(\mathrm{erf}\left(\frac{t_j-t}{\sqrt{2}l}\right)+\mathrm{erf}\left(\frac{t}{\sqrt{2}l}\right) \right)
\end{equation*}
and 
\begin{equation*}
  \int_{0}^{t_i}\int_{0}^{t_j}k(s,t)\rd s\rd t
  = \sqrt{\pi}l^2 \left( h\left(\frac{t_i}{\sqrt{2}l}\right)+h\left(\frac{t_j}{\sqrt{2}l}\right)-h\left(\frac{t_i-t_j}{\sqrt{2}l}\right)-h(0) \right) .
\end{equation*}
Therefore, the numerical integrals for forming $G_1$ and $\Phi$ can be avoided.

Sometimes the hypothesis space of the Gaussian kernel is too smooth for fitting the derivative function. In this case, we can choose the Mat\'{e}rn kernel
\begin{equation}
  k_{\nu,l}(s,t) = \frac{2^{1-\nu}}{\Gamma(\nu)}\left(\frac{\sqrt{2\nu}|s-t|}{l}\right)^{\nu}K_{\nu}\left(\frac{\sqrt{2\nu}|s-t|}{l} \right) ,
\end{equation}
where $\nu, l>0$ are hyperparameters, $\Gamma(\cdot)$ is the gamma function, and $K_{\nu}(\cdot)$ is the modified Bessel function of the second kind of order $\nu$. The parameter $\nu$ effectively controls the level of smoothness of the function. In fact, for any Sobolev space $H^{\tau}(X)$ with $\tau=\nu+\frac{1}{2}$ is a positive integer, then the RKHS $\calH_{k_{\nu,l}}$ is equivalent to $H^{\tau}(X)$. Therefore, for the separable kernel $K(s,t)=k_{\nu,l}(s,t)I_{d}$, the corresponding $\mathbb{R}^{d}$-valued RKHS is $
  \calH_{K} = \underbrace{H^{\tau}(X) \otimes \cdots \otimes H^{\tau}(X)}_{d}$,
meaning that each component of the trajectory belongs to $H^{\tau}(X)$. 
For the Mat\'{e}rn kernel with $\nu=\tau-\frac{1}{2}=(\tau-1)+\frac{1}{2}$, a well-known property is that $k_{\nu,l}(s,t)$ can be written as
\begin{equation*}
  k_{\nu,l}(s,t) = \exp\left(-\frac{\sqrt{2\nu}|s-t|}{l}\right)p_{\tau-1}\left(\frac{|s-t|}{l}\right),
\end{equation*}
where $p_{\tau-1}(\cdot)$ is a polynomial of degree $\tau-1$; see \cite[\S 4.2.1]{williams2006gaussian}. Therefore, $ k_{\nu,l}(s,t)$ is integrable with respect to both $s$ and $t$. To efficiently compute $\int_{0}^{t_i}\int_{0}^{t_j}k(s,t)\rd s\rd t$ and $\psi_{j}(t_i)=\int_{0}^{t_j}k(s,t)\rd s$, we can first derive the analytic forms of the above two functions and then evaluate them at the corresponding points.

\subsection{Determining regularization parameter}
Now we show how to compute a good estimation of the optimal regularization parameter $\lambda$. Heuristically, a good $\lambda$ should balance the loss term and regularization term in \cref{ls_1}. We estimate the optimal parameter $\lambda$ by the L-curve method \cite{hansen1999curve}. The idea is to plot the following parametrized curve in log-log scale:
\begin{equation*}
  l(\lambda) = (x(\lambda), y(\lambda)): = \left(\log(\|\mathbf{G}\bv_{\lambda}-\bb\|_{2}), \log((\bv_{\lambda}^{\top}\mathbf{G}\bv_{\lambda})^{\frac{1}{2}})\right),
\end{equation*}
which usually has a characteristic ``L'' shape, and the corner of $l(\lambda)$ corresponds to the point where further reduction in the residual comes only at the expense of a drastic increase in the regularization term. Therefore, a good estimate of $\lambda$ should maximize the (signed) curvature of $l(\lambda)$. Let the eigen-decomposition of $G_1$ be $G_{1}=U\Lambda U^{\top}$, where $U=(u_1,\dots,u_n)\in\mathbb{R}^{n\times n}$ and $\Lambda=\diag(\lambda_{1},\dots,\lambda_n)$ with $\lambda_1\geq\cdots\geq\lambda_{n}>0$. In practical computation, we restrict $\lambda$ in the spectral range of $G_{1}$, and compute 
\begin{equation}\label{estim_lamb}
  \lambda^{*} = \argmax_{\lambda_{n}\leq\lambda\leq\lambda_{1}} \kappa(\lambda):=
  \frac{x'y''-y'x''}{(x'^{2}+y'^{2})^{3/2}}
\end{equation}
as the optimal regularization parameter. Since 
\begin{equation}\label{V_lamb}
  \mathbf{V}_{\lambda} =\mathbf{B}(G_{1}+\lambda I_{n})^{-1}
  = \mathbf{B}U(\Lambda+\lambda I)^{-1}U^{\top},
\end{equation}
we can quickly obtain $\mathbf{V}_{\lambda}$ for different $\lambda$ once we have the eigen-decomposition of $G_{1}$. Then we get the analytic expressions of $x(\lambda)$ and $y(\lambda)$ via the equalities
\begin{align*}
  \|\mathbf{G}\bv_{\lambda}-\bb\|_{2}
  = \|(G_{1}\otimes I_d)\bv_{\lambda}-\bb\|_{2} 
  = \|\rvec(\mathbf{V}_{\lambda}G_1)-\rvec(\mathbf{B})\|_{2} 
  = \|\mathbf{V}_{\lambda}G_{1}-\mathbf{B}\|_{F} 
\end{align*}
and 
\begin{align*}
  \bv_{\lambda}^{\top}\mathbf{G}\bv_{\lambda}
  &= \sum_{i,j=1}^{n}v_{i}^{\top}(G_{ij}I_d)v_j
  = \sum_{i=1}^{n}v_{i}^{\top}\left(\sum_{j=1}^{n}G_{ij}v_j\right)
  = \sum_{i=1}^{n}v_{i}^{\top}(\mathbf{V}_{\lambda}G_{1}^{\top})_i \\
  &= \mathrm{trace}(\mathbf{V}_{\lambda}^{\top}\mathbf{V}_{\lambda}G_{1}) 
  = \mathrm{trace}(\mathbf{V}_{\lambda}G_{1}\mathbf{V}_{\lambda}^{\top}) .
\end{align*}

The algorithm for estimating the values of $\dot{\bx}(t)$ and $\bx(t)$ at $\{t_{i}\}_{i=1}^{n}$ is summarized in \Cref{alg1:denoise}.

\begin{algorithm}[htb]
	\caption{Fitting derivative and trajectory by vRKHS}\label{alg1:denoise}
 	\algorithmicrequire \ Initial value $\bx_0$, noisy observation $\{(t_i, \by_i)\}_{i=1}^{n}$; kernel function $k(t,t')$
	\begin{algorithmic}[1]
		\State Form matrix $G_1$ by \cref{mat_G1}
    \State Form matrix $\mathbf{B}=(\by_1-\bx_0,\dots,\by_n-\bx_0)$
    \State Compute the eigen-decomposition of $G_1$: $G_1=U\Lambda U^{\top}$
    \State Estimate the optimal $\lambda$ by \cref{estim_lamb}
    \State Compute $\mathbf{V}_{\lambda}$ by \cref{V_lamb}
    \State Compute the fitted derivative by \cref{deriv_approx}
    \State Compute the fitted trajectory by \cref{traj_approx}
	\end{algorithmic}
 	\algorithmicensure \ Estimated $\begin{pmatrix}
    \dot{\bx}(t_1) & \cdots  & \dot{\bx}(t_n)
  \end{pmatrix}$ and $\begin{pmatrix}
    \bx(t_1) & \cdots  & \bx(t_n)
  \end{pmatrix}$
\end{algorithm}

\section{Learning dynamics from the fitted derivative and trajectory}\label{sec5}
In addition to the SINDy framework, the dynamics \(\bbf\) can also be embedded within a vRKHS. This strategy is particularly well-motivated due to the strong connection between the regularity properties of a kernel and those of \(\bbf\). Specifically, one can select a vRKHS such that the existence and uniqueness of the corresponding ODE are guaranteed \cite{lahouel2024learning}. An additional advantage of this approach is the elimination of the need to select a dictionary of pre-defined basis functions.

Denote the estimated values of $\{\dot{\bx}(t_i)\}_{i=1}^{n}$ and $\{\bx(t_i)\}_{i=1}^{n}$ by $\{\dot{\bphi}(t_i)\}_{i=1}^{n}$ and $\{\bphi(t_i)\}_{i=1}^{n}$, respectively. Assume $\bbf$ belong to an $\mathbb{R}^{d}$-valued RKHS on $\mathbb{R}^{d}$. We recover the dynamics $\bbf$ by fitting the ODE \cref{update1} at the time points $\{t_i\}_{i=1}^{n}$, i.e., we compute $\bbf$ by solving the following regularization problem:
\begin{equation}
  \min_{\widetilde{\bbf}\in\calH_K} \sum_{i=1}^{n}\|\dot{\bphi}(t_i)-\widetilde{\bbf}(\bphi(t_i))\|_{2}^2 + \lambda\|\widetilde{\bbf}\|_{\calH_K}^{2}.
\end{equation}
By \Cref{thm:rt}, the solution of this problem has the representation
\begin{equation}\label{recons_f}
  \widehat{\bbf} = \sum_{i=1}^{n}K(\bphi(t_i),\cdot)v_i
\end{equation}
with some $v_i\in\mathbb{R}^d$ for $i=1,\dots,n$. Write the coefficient vector as $\bv=(v_{1}^{\top},\dots,v_{n}^{\top})^{\top}$. 
By a similar discussion as the previous section, it follows that $\bv$ is the solution to the finite-dimensional regularization problem
\begin{equation}\label{regu_f}
  \min_{\bv\in\mathbb{R}^{nd}}\|\widetilde{\mathbf{G}}\bv-\widetilde{\bb}\|_{2}^{2}+\lambda \bv^{\top}\widetilde{\mathbf{G}}\bv
\end{equation}
where 
\begin{equation}
  \widetilde{\mathbf{G}} = 
  \begin{pmatrix}
    K(\bphi(t_1),\bphi(t_1)) & \cdots & K(\bphi(t_1),\bphi(t_n)) \\
    \vdots & \ddots & \vdots \\
    K(\bphi(t_n),\bphi(t_1)) & \cdots & K(\bphi(t_n),\bphi(t_n))
  \end{pmatrix}, \ \ \
  \widetilde{\bb} = 
  \begin{pmatrix}
    \dot{\bphi}(t_1) \\ \vdots \\ \dot{\bphi}(t_n)
  \end{pmatrix} .
\end{equation}
Similar to the approach in the previous section, by choosing $K(t,t')=k(t,t')I_d$ with a strictly positive definite kernel $k(t,t')$, we can write $\widetilde{\mathbf{G}}$ as $\widetilde{\mathbf{G}}=\widetilde{G}_{1}\otimes I_d$ with $\widetilde{G}_1=(k(\bphi(t_i),\bphi(t_j)))\in\mathbb{R}^{n\times n}$. Therefore, we obtain
\begin{equation}\label{vec_fv}
  \bv = [(\widetilde{G}_{1}+\lambda I_{n})^{-1} \otimes I_{d}] \widetilde{\bb} 
  = \rvec(\widetilde{\mathbf{B}}(\widetilde{G}_{1}+\lambda I_{n})^{-1}),
\end{equation}
where $\widetilde{\mathbf{B}}=(\dot{\bphi}(t_1),\dots,\dot{\bphi}(t_n))$.
Similarly, we can use the L-curve method to select a good parameter $\lambda$ for \cref{regu_f}.

Finally, the whole procedure for learning dynamics from noise data $\{(t_i,\by_{i})\}_{i=1}^{n}$ is shown in \Cref{alg2:dynamics}.
\begin{algorithm}[htb]
	\caption{Learning dynamics by vRKHS}\label{alg2:dynamics}
 	\algorithmicrequire \ Initial value $\bx_0$, noisy observation $\{(t_i, \by_i)\}_{i=1}^{n}$; kernel function $k(\bx,\bx')$
	\begin{algorithmic}[1]
		\State Estimate the values of $\{\dot{\bx}(t_i)\}_{i=1}^{n}$ and $\{\bx(t_i)\}_{i=1}^{n}$ as $\{\dot{\bphi}(t_i)\}_{i=1}^{n}$ and $\{\bphi(t_i)\}_{i=1}^{n}$
    \State Form matrix $\widetilde{G}_1=(k(\bphi(t_i),\bphi(t_j)))$ and $\widetilde{\mathbf{B}}=(\dot{\bphi}(t_1),\dots,\dot{\bphi}(t_n))$
    \State Estimate the optimal $\lambda$ by L-curve
    \State Compute $\bv$ by \cref{vec_fv}
    \State Compute the recovered dynamics by \cref{recons_f}
	\end{algorithmic}
 	\algorithmicensure \ Recovered $\widehat{\bbf}$
\end{algorithm}

\section{Numerical experiments}\label{sec6}
We choose several typical autonomous dynamical systems to test the proposed methods. \Cref{sec61} presents results for estimating derivatives and trajectories from the noisy time-series data. \Cref{sec62} presents results for learning dynamics based on the fitted trajectories and derivatives. The codes are available at \url{https://github.com/hessianguo/ODELearning}. 

\paragraph*{Forced vibration of nonlinear pendulum}
This is a second-order ODE describing the motion of a simple pendulum. It has the form $\frac{\rd^{2}\theta}{\rd t^2}+\alpha\sin(\theta)=f(\theta)$, where $\theta$ is the angle from the vertical to the pendulum, and $\alpha=g/L$ with $g$ and $L$ the acceleration of gravity and the length of the pendulum, respectively. We assume the external force $f$ only depends on $\theta$. Let $x_1=\theta$ and $x_2=\dot{\theta}$. We have the following equivalent 2-dimensional (dim) ODE:
\begin{equation}\label{pendulum}
  \begin{cases}
    \frac{\rd x_1}{\rd t} = x_2 \\
    \frac{\rd x_2}{\rd t} = f(x_1)-\alpha\sin x_1 .
  \end{cases}
\end{equation}
In the experiment, we set $L=5$, $f(x)=\cos(e^x)$. The initial value is set as $\bx_{0}=(0,0)^{\top}$.

\paragraph*{Lotka--Volterra equation}
The Lotka--Volterra equation, also known as the Lotka-Volterra predator-prey model, is a pair of first-order nonlinear differential equations used to describe the dynamics of biological systems in which two species interact, one as a predator and the other as prey \cite{lotka1925elements,volterra1928variations}. The ODE model is as follows:
\begin{equation}\label{LV}
  \begin{cases}
    \frac{\rd x_1}{\rd t} = \alpha x_1 - \beta x_1x_2 \\
    \frac{\rd x_2}{\rd t} = \delta x_1x_2 - \gamma x_2 .
  \end{cases}
\end{equation}
In the experiment, we set $(\alpha,\beta,\gamma,\delta)==(0.7, 0.007, 1, 0.007)$. The initial value is set as $\bx_0=(70, 50)^{\top}$.

\paragraph*{SIR model} 
The SIR model \cite{kermack1927contribution} describes the evolution of an epidemic by the follows ODEs:
\begin{equation}\label{sir}
  \begin{cases}
    \frac{\rd S}{dt} = - \beta \frac{SI}{S+I+R} \\
    \frac{\rd I}{dt} = \beta \frac{SI}{S+I+R} - \gamma I \\
    \frac{\rd R}{dt} = \gamma I ,
  \end{cases}
\end{equation}
where $S$, $I$, and $R$ are the numbers of susceptible, infected and recovered individuals, respectively. The parameter $\beta$ is the infection rate and $\gamma$ is the rate at which individuals recover. In the experiment, we set $(\beta, \gamma)=(0.4, 0.04)$ and $(S(0), I(0), R(0))=(900, 10, 0)$. 

\paragraph*{Lorenz63 model} The Lorenz63 model was proposed by the American meteorologist E. Lorenz in 1963 for for describing atmospheric turbulence \cite{lorenz1963deterministic}. It is a simple mathematical system constituted by three ordinary differential equations:
\begin{equation}\label{lorenz63_2}
  \begin{cases}
    \frac{\rd x_1}{\rd t} = \sigma(x_2-x_1) \\
    \frac{\rd x_2}{\rd t} = x_1(\rho-x_3)-x_2 \\
    \frac{\rd x_3}{\rd t} = x_1x_2-\beta x_3 .
  \end{cases}
\end{equation}
In the experiment, we set $(\sigma, \rho, \beta)=(10, 28, 8/3)$ and $\bx_0=(1, 1, 1)^{\top}$, which will lead to chaotic solutions and in particular, the Lorenz attractor.

\paragraph*{Lorenz96 model}
The Lorenz96 model is a dynamical system formulated by Edward Lorenz in 1996 \cite{lorenz1996predictability}, which is a system of ordinary differential equations that describes a single scalar quantity as it evolves on a circular array of sites, undergoing forcing, dissipation, and rotation invariant advection. It is defined as follows. For $i=1,\dots,N$, the equation is
\begin{equation}\label{lorenz96}
  \frac{\rd x_i}{\rd t} = (x_{i+1}-x_{i-2})x_{i-1} - x_i + F, \ \ \ 1\leq i \leq N, 
\end{equation}
where it is assumed that $x_{-1}=x_{N-1},x_0=x_N$ and $x_{N+1}=x_1$. Here $x_i$ is the state of the system and $F$ is a forcing constant. Here we set $N=5$ and $F=8$, which is a common value known to cause chaotic behavior. The initial value is set as $\bx_0=(8.01, 8, 8, 8, 8)^{\top}$.

For the first four ODEs, the trajectories and the noisy observation data are shown in \Cref{fig:noisy_data1}. For Lorenz96, we plot the trajectories and observations corresponding to the $x_1$-$x_2$-$x_3$ and $x_4$-$x_5$ projections.

\begin{figure}[htbp]
	\centering
	\subfloat[]
	{\label{fig:2a}\includegraphics[width=0.48\textwidth]{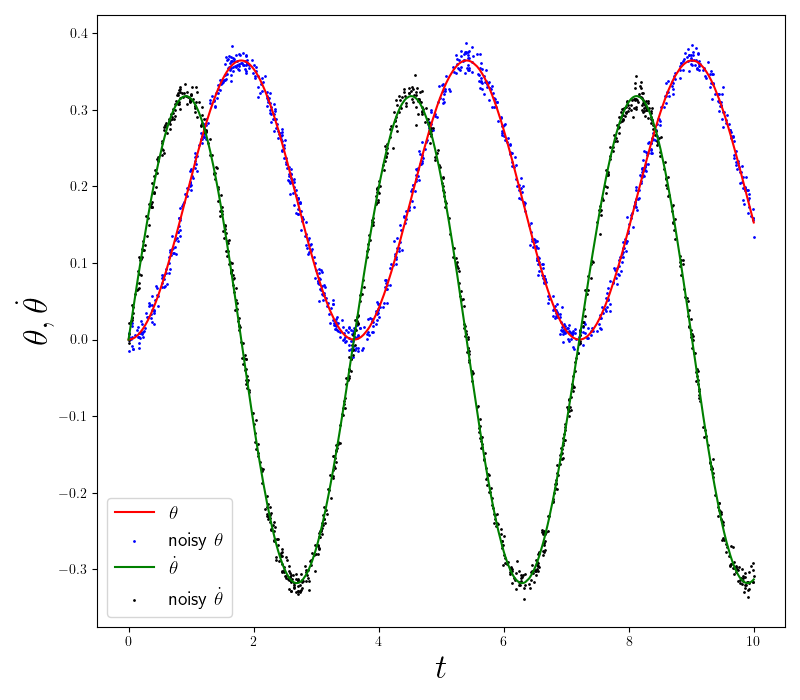}}\hspace{-1.0mm}
	\subfloat[]
	{\label{fig:2b}\includegraphics[width=0.48\textwidth]{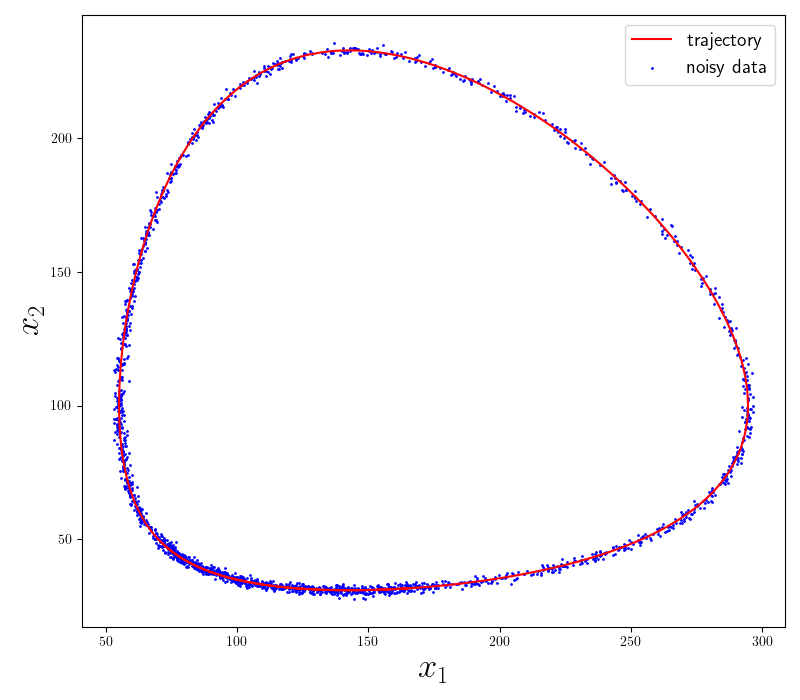}}
	\vspace{-3mm}
	\subfloat[]
	{\label{fig:2c}\includegraphics[width=0.46\textwidth]{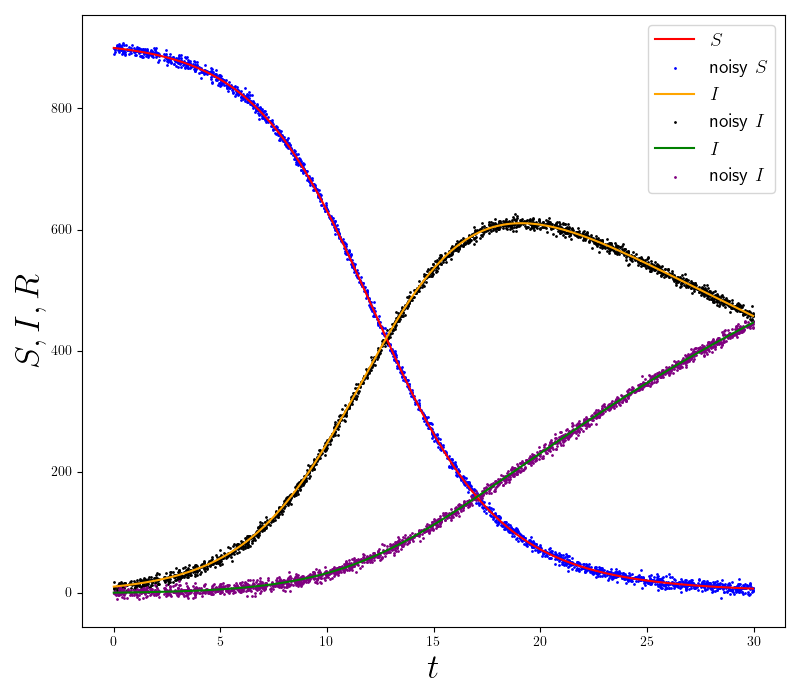}}\hspace{-2.0mm}
	\subfloat[]
	{\label{fig:2d}\includegraphics[width=0.5\textwidth]{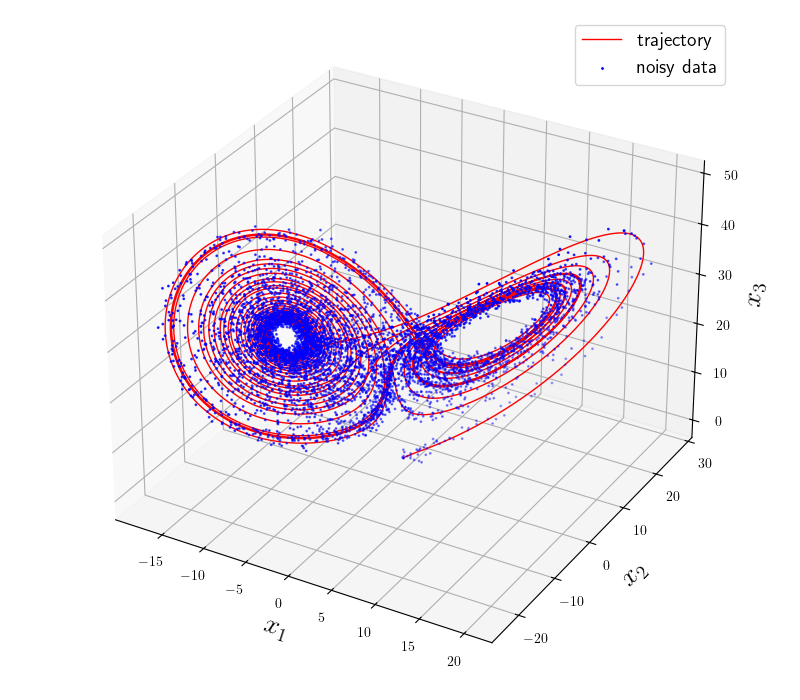}}
	\vspace{-3mm}
  \subfloat[]
	{\label{fig:2e}\includegraphics[width=0.5\textwidth]{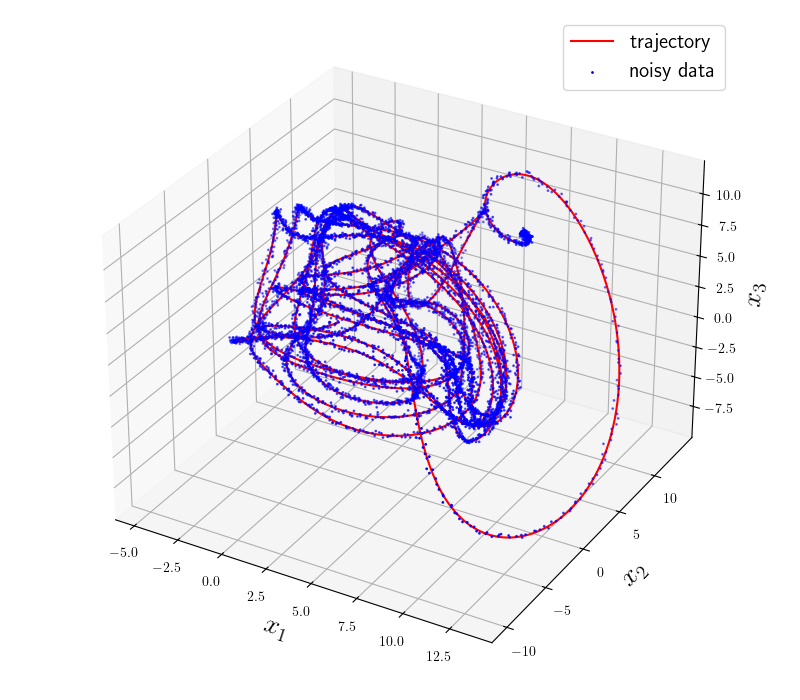}}\hspace{-2.0mm}
  \subfloat[]
	{\label{fig:2f}\includegraphics[width=0.45\textwidth]{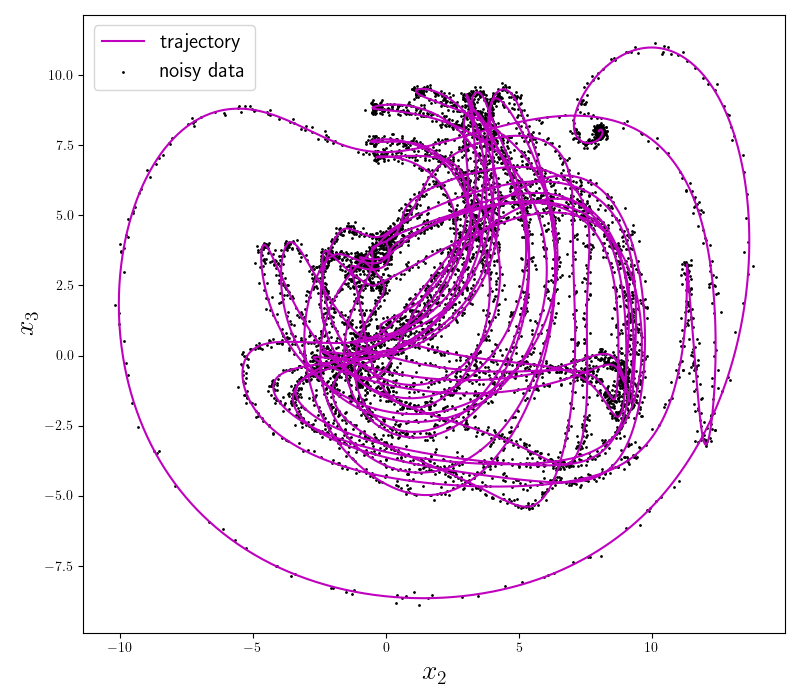}}
  \vspace{-2.0mm}
	\caption{Trajectories and noisy data. (a) Forced vibration of nonlinear pendulum, 1000 random times points in $[0,10]$ with noisy level $0.01$. (b) Lotka--Volterra equation, 2000 random times points in $[0,10]$ with noisy level $1.0$. (c) SIR model, 3000 random time points in $[0,30]$ with noisy level $5.0$. (d) Lorenz63 model, 6000 random times points in $[0,30]$ with noisy level $0.5$. (e), (f) Projections of the trajectories and noisy data for Lorenz96, 8000 random times points in $[0,30]$ with noisy level $0.1$.}
	\label{fig:noisy_data1}
\end{figure}

\subsection{Numerical comparison of the methods for estimating derivatives}\label{sec61}
In this subsection, we present a qualitative comparison study among the proposed RKHS method, finite difference methods, and the TV-regularized method \cite{chartrand2011numerical}. We note that there is no universally best method for estimating derivatives in all scenarios. However, our vRKHS method is particularly well suited for simultaneously fitting both derivatives and trajectories from noisy time-series data. 

In all the following computations, we choose the Gaussian kernel for estimating derivatives. For the nonlinear pendulum, Lotka--Volterra, SIR, Lorenz63, and Lorenz96 systems, the hyperparameter $\sigma$ is set to $0.2$, $0.4$, $5.0$, $0.04$, and $0.05$, respectively. Methods for automatically determining a suitable hyperparameter $l$ can be found in \cite[Chapter 5]{williams2006gaussian}.

\begin{table}[htb!]
  \centering
  \caption{Numerical comparison of the derivative estimations for $g(t) = \cos t$.  }\label{tab:analytical}
  \begin{tabular}{cccc}
  \toprule
  Numerical methods& $h = 0.01, \delta = 0.01$ & $h = 0.01, \delta = 0.1$ & $h = 0.1, \delta = 0.01$ \\ \toprule
  Finite difference & 1.71e-1  &1.56e-0 & 6.45e-2\\ \hline
  TV regularization & 1.90e-1  & 1.00e-1 & 4.01e-1  \\ \hline
  Method in \cite{nayak2020varitational} & 6.07e-2 & 8.39e-2 & 1.36e-1 \\ \hline
  vRKHS method & 1.86e-2 &7.49e-2 & 4.06e-2  \\ \bottomrule
  \end{tabular}%
\end{table}

In our first test, we consider the benchmark example of estimating the derivative of the function $g(x) = \cos t$ on $I = [-0.5, 0.5]$, as in \cite{knowles2014differentiation, nayak2020varitational}. The observation data is given by $y_i = g(t_i) + \eta_i$, where $t_i$ are the $n$ uniformly distributed points on $I$, and $\eta_i$ is Gaussian noise with standard deviation $\delta$. We quantify the performance of the numerical methods by considering the relative $L^2$ error $\|\dot{\phi} - \frac{\rd g}{\rd t}\|_{L^2(I)}/\|\frac{\rd g}{\rd t}\|_{L^2(I)}$, where $\dot{\phi}$ denotes the estimated derivative. Similar to \cite{knowles2014differentiation, nayak2020varitational}, we consider data points equidistributed on $I$ with mesh sizes $h = 0.1$ ($n = 11$) and $h = 0.01$ ($n = 101$). The hyperparameter $l$ in the Gaussian kernel is selected as $0.01$ and $0.1$ for $h = 0.01$, and $0.01$ for $h = 0.1$. \Cref{tab:analytical} displays the numerical errors of the three methods and compares the numerical results in \cite{nayak2020varitational}. Notably, our proposed method consistently achieves the smallest errors across all cases.

\begin{table}[htb!]
  \centering
  \caption{Numerical comparison of the derivative estimations from the noisy data for Lorenz63 system.}\label{tab:comp}
  \begin{tabular}{ccccc}
  \toprule
  Numerical methods& $\delta = 0.01$ & $\delta = 0.1$ & $\delta = 0.5$  & $\delta = 1$ \\ \toprule
  Finite difference & 2.50e-2  &2.45e-1 & 1.25e-0 & 2.48e-0\\ \hline
  TV regularization & 6.99e-2  & 2.12e-1 & 1.08e-0 & 1.56e-1 \\ \hline
  vRKHS method & 3.44e-3 &2.00e-2 & 7.31e-2 & 1.43e-1 \\ \bottomrule
  \end{tabular}
\end{table}

In our second test, the observation data is given by $\by = \bx + \bbeta$, where $\bbeta$ is a normal random variable in $\mathbb{R}^3$ with mean $0$ and standard deviation $\delta$, and $\bx$ is the numerical solution of the Lorenz63 model \cref{lorenz63_2} over $[0, 30]$. The observation data consists of uniformly spaced points with $h = 0.005$. We evaluate the performance by computing the discrete relative $L^2$ error, i.e., the discrete counterpart of $\|\dot{\bphi} - \frac{\rd\bx}{\rd t}\|_{L^2[0, 30]}/\|\frac{\rd\bx}{\rd t}\|_{L^2[0, 30]}$, where $\dot{\bphi}$ denotes the estimated derivative. We consider four different choices of $\delta = 0.01, 0.1, 0.5, 1$. The numerical errors are reported in \Cref{tab:comp}.

\begin{figure}[htbp]
	\centering
	\subfloat
	{\label{flcurve}\includegraphics[width=0.48\textwidth]{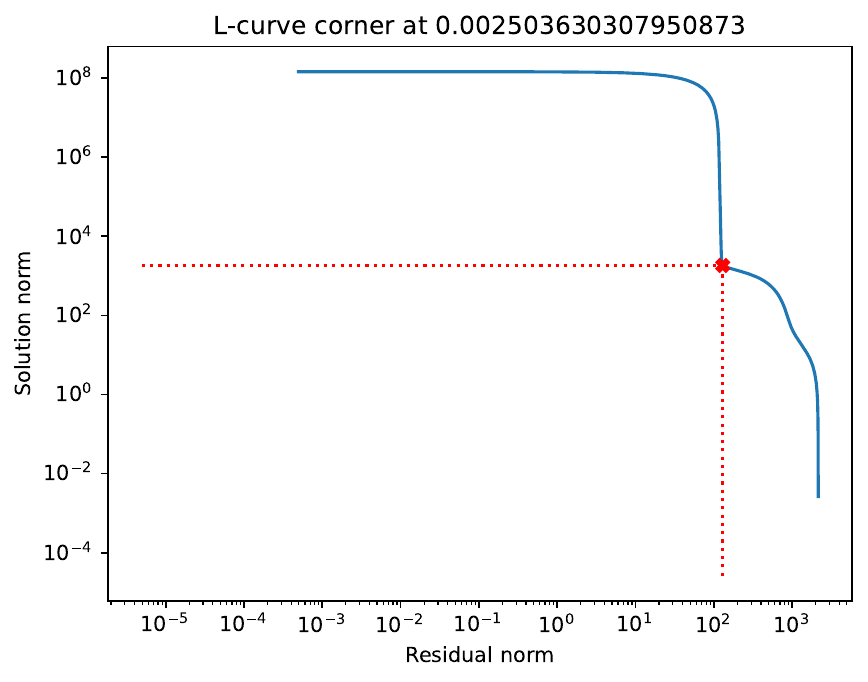}}
	\vspace{-3mm}
	\caption{Plot of L-curve for vRKHS based derivative estimation for the Lorenz63 system.}
	\label{fig:lcurve}
\end{figure}

From this table, we can see that our proposed method achieves a smaller relative error compared to the finite difference method and the TV regularization method when the noise level is small, i.e., $\delta = 0.01, 0.1$. In the large noise regime, our proposed method still turns out to be the best. Additionally, from \Cref{fig:lcurve}, we observe the L-shaped curve, where the best regularization parameter in our method can be automatically determined by finding the corner of the L-curve. In contrast, for the TV method, we have to try various $\lambda$ and solve \cref{TV_deriv} multiple times to select a good parameter.

\begin{figure}[htbp]
	\centering
	\subfloat
	{\label{fig:fd}\includegraphics[width=0.33\textwidth]{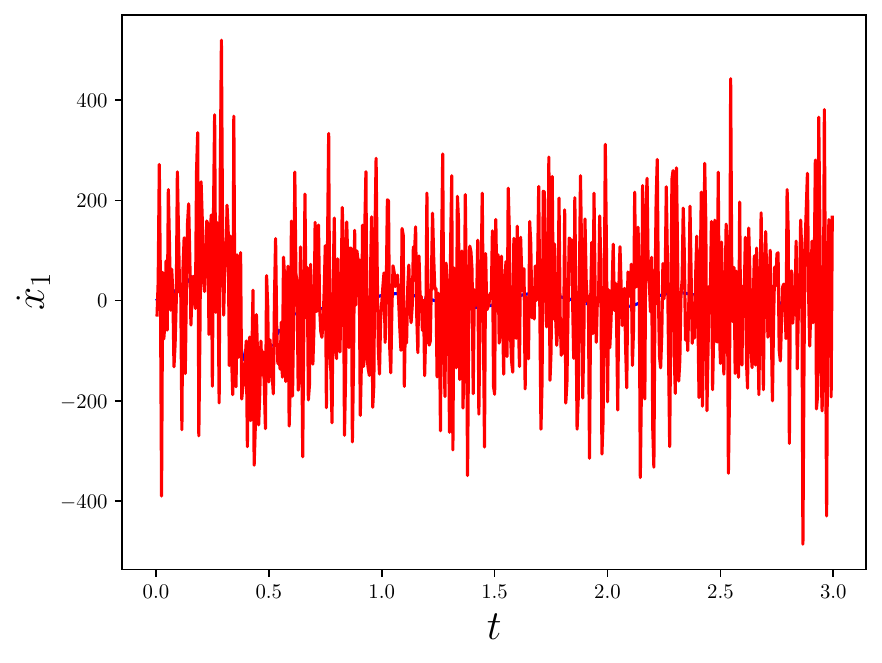}}\hspace{-1.0mm}
	\subfloat
	{\label{fig:tv}\includegraphics[width=0.33\textwidth]{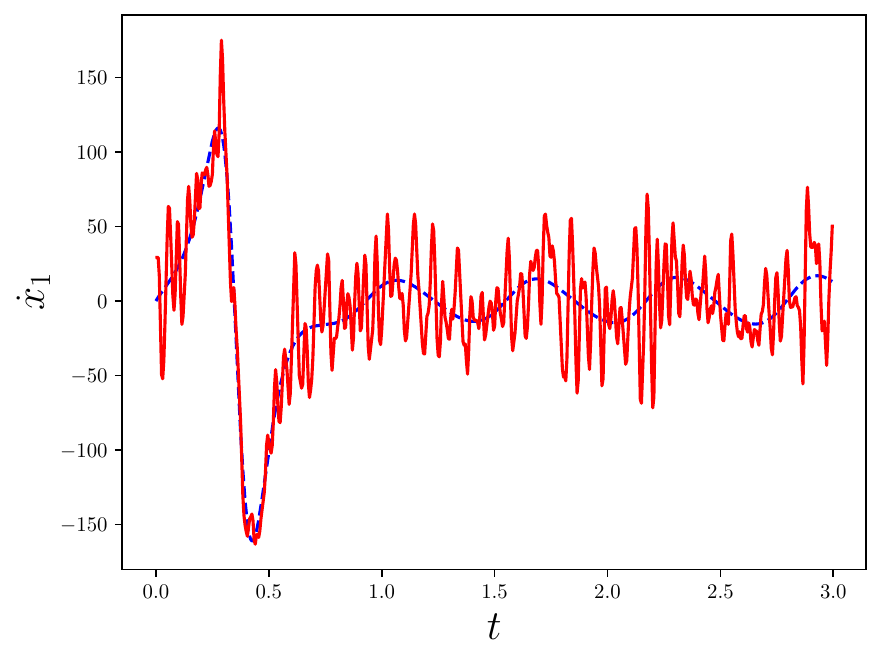}}\hspace{-1.0mm}
	\subfloat 
	{\label{fig:rkhs}\includegraphics[width=0.33\textwidth]{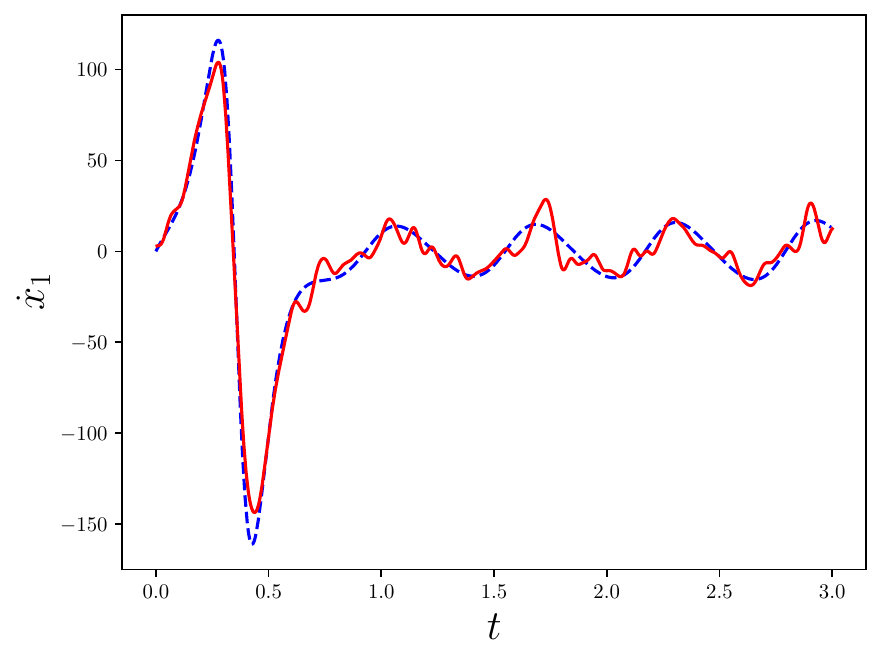}}
	\vspace{-3mm}
	\caption{Numerical derivative of $x_1$ from noise data for Lorzen63 system with $\delta = 1$. The exact derivative is shown in blue ($--$) and the numerical derivative is shown in red (--). (a) The numerical derivative obtained by finite difference method. (b) The numerical derivative obtained by TV regularization method. (c) The numerical derivative obtained by vRKHS method.}
	\label{fig:comp}
\end{figure}

To present a more intuitive comparison, we plot the numerical derivative of $\dot{x}_1$ over $(0, 3)$ in \Cref{fig:comp} when $\delta = 1$. From the figure, it is apparent that finite difference methods give inaccurate oscillatory numerical derivatives. A closer inspection of \Cref{fig:comp} shows that our proposed method provides highly accurate numerical differentiation, even in the presence of large noise.

\begin{figure}[htbp]
	\centering
	\subfloat
	{\label{fig:denoisedata_lorentz}\includegraphics[width=0.45\textwidth]{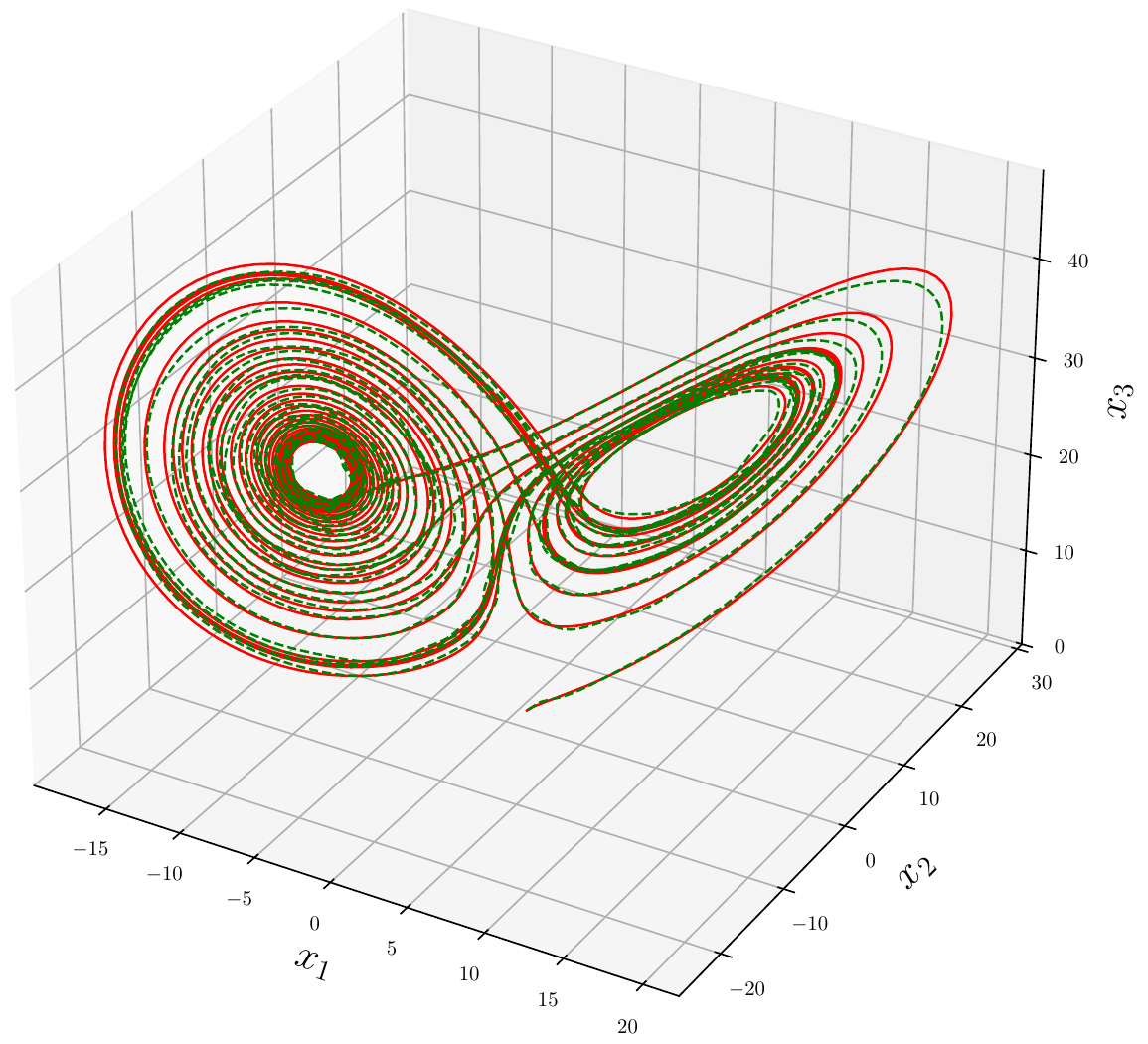}}\hspace{2.0mm}
	\subfloat
	{\label{fig:denoisedata_lotka}\includegraphics[width=0.40\textwidth]{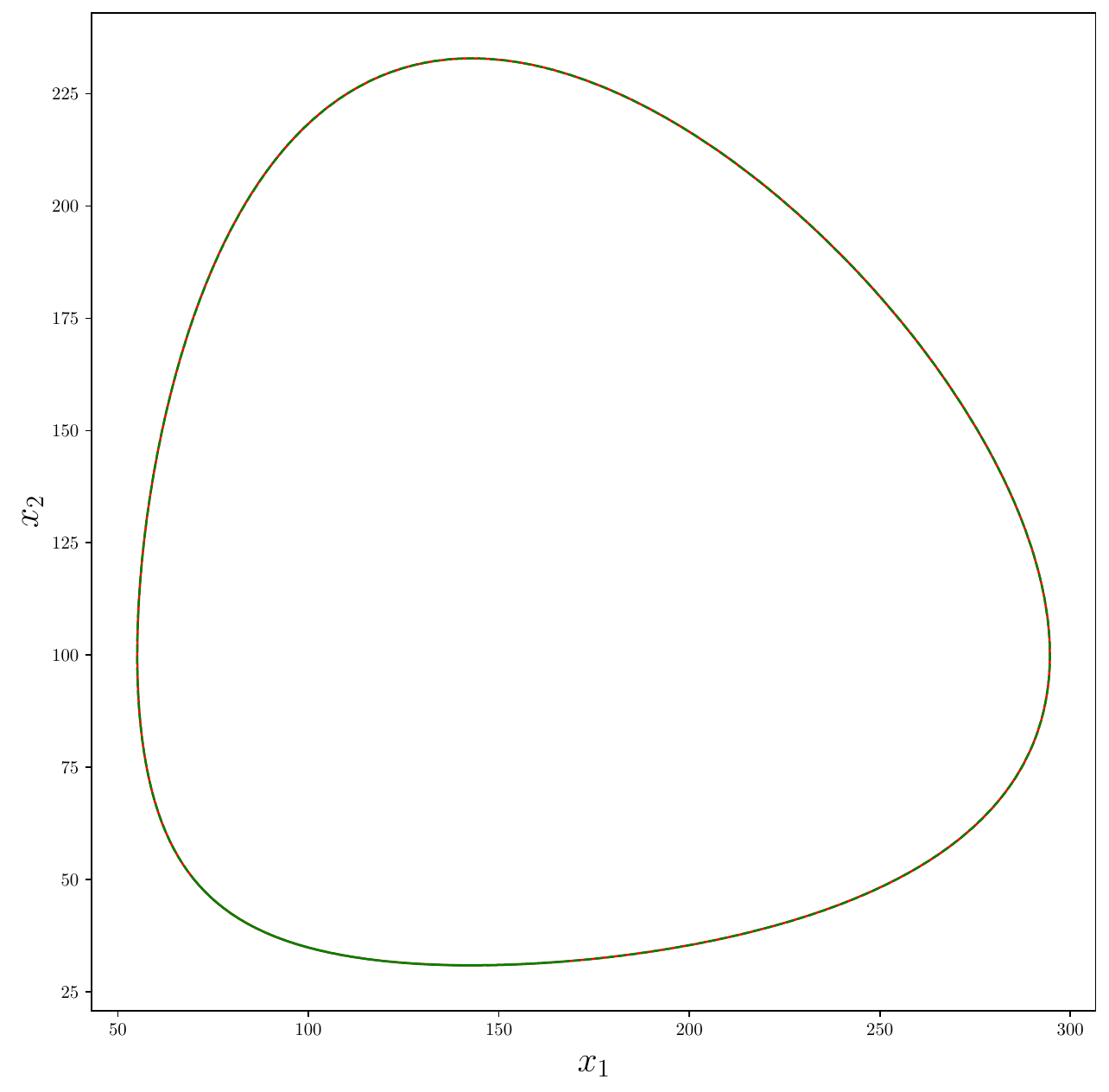}}\
	\vspace{-2mm}
	\caption{Observation data $y_1$ with noise $\delta = 1$ and the denoised data using vRKHS.  The exact data is shown in blue (--),  the observation data is shown in red (-), and the denoised data using vRKHS is shown in green ($--$). (a) Exact data and denoise data for Lorenz63 on equidistributed time points. (b) Exact data and denoised data for Lotka--Volterra system on nonuniform distributed time points.}
	\label{fig:denoise}
\end{figure}
    
\begin{figure}[htbp]
	\centering
	\subfloat
	{\label{fig:deriv}\includegraphics[width=1.0\textwidth]{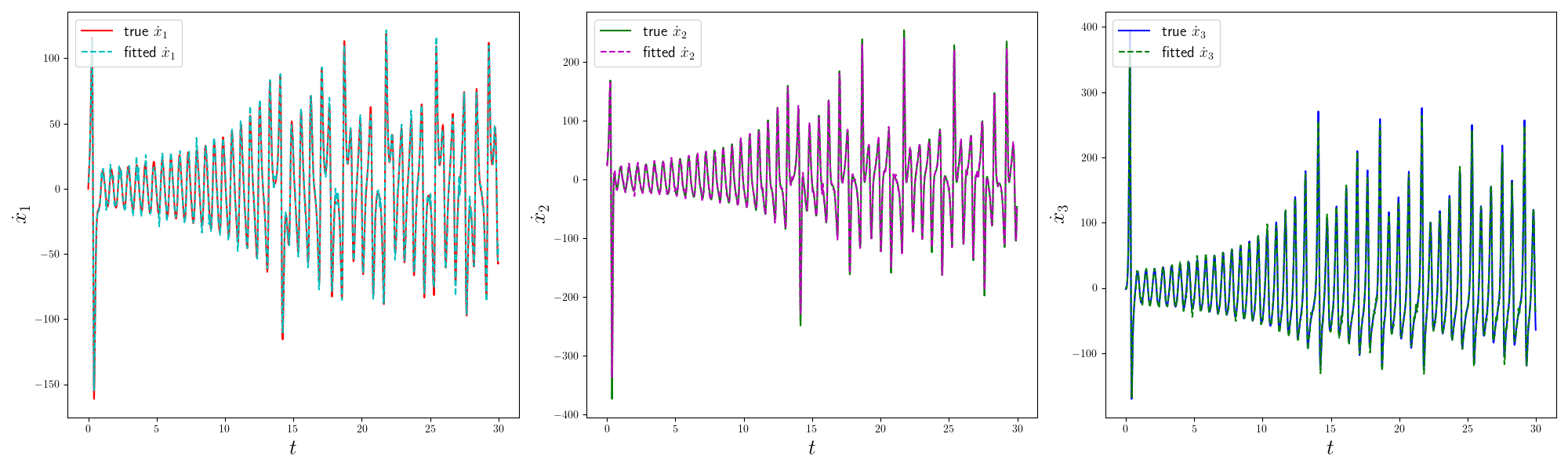}}\vspace{-2.0mm}
	\subfloat
	{\label{fig:traj}\includegraphics[width=1.0\textwidth]{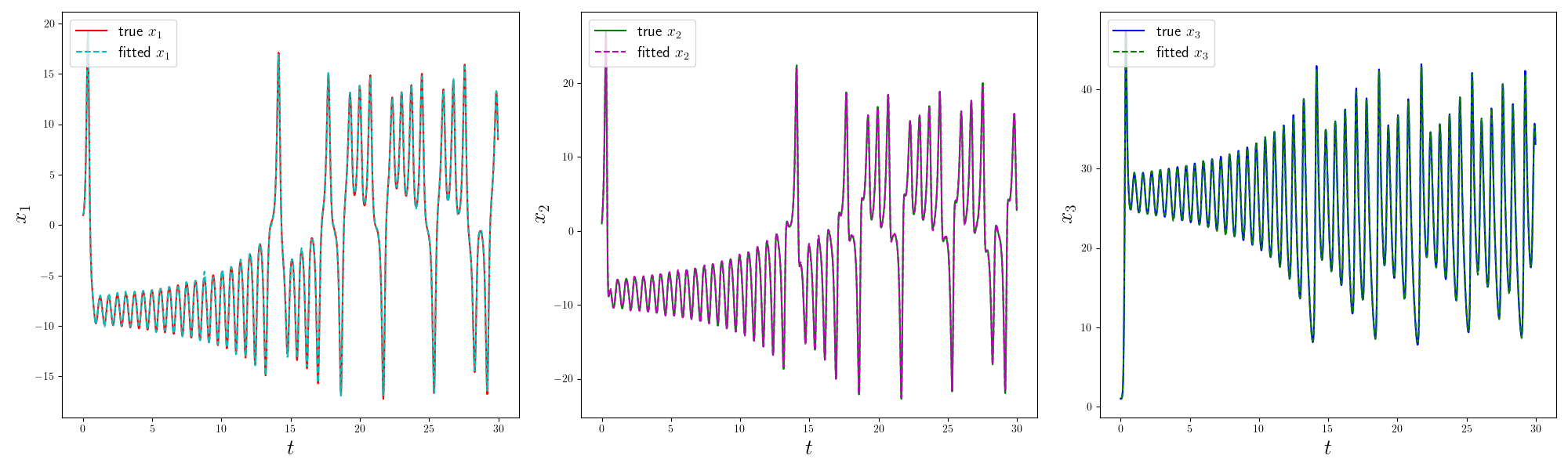}}
	\vspace{-3mm}
	\caption{True and fitted derivatives and trajectories of Lorenz63 system by vRKHS. The noise data are taken from 4000 equistributed points in $[0,30]$ with noise level $\delta = 0.5$.}
	\label{fig:fit}
\end{figure}

One of the most important distinguishing features of the proposed method is that we can fit the trajectory from the noisy data with only a little additional computational cost. We plot the noisy observation and the denoised data in \Cref{fig:denoise} when $\delta = 1$. Due to the large standard deviation $\delta$, the observation data is highly oscillatory, which leads to very large error of the numerical derivative obtained by the finite difference method. The denoised data obtained by the proposed method provide a very good fit to the exact data, as can be observed in \Cref{fig:denoisedata_lorentz}. A clearer presentation is provided in \Cref{fig:fit}, where we compare the true and fitted derivatives, as well as the trajectories, using 4000 noisy data points taken from the time interval $[0,30]$. From the figure, it is evident that, despite both $x_i$ and $\dot{x}_i$ being highly oscillatory functions, our vRKHS method is able to fit them well simultaneously from the noisy data.

\begin{table}[h]
  \centering
  \caption{Comparison of numerical errors for different methods on equistributed obervation data}
  \label{tab:other_odes}
  \begin{tabular}{ccccc}\toprule
    \multirow{2}{*}{ Numerical methods} & Pendulum & Lotka--Volterra & SIR & Lorenz96 \\
     \cline{2-5}
    & $\delta = 0.01$  & $\delta = 1$  &$\delta = 5 $ & $\delta = 0.1$ \\
      \hline
     Finite difference method & 7.71e-1 & 3.18e-0 & 1.40e+1 & 2.35e-0 \\ \hline
     TV regularization & 7.19e-1 & 2.15e-0 & 9.06e-0  & 6.71e-1 \\ \hline
     vRKHS method & 1.84e-2 & 1.83e-2 & 3.20e-2 & 2.46e-2 \\ \bottomrule
  \end{tabular}
\end{table} 

We also perform a comparative study for four other ODE models. Similar to the second numerical example, the observation data is obtained by adding white noise with standard deviation $\delta$ to the numerical solution at equidistributed data points with mesh size $h = 0.0125$ (Lorenz96), $h = 0.005$ (Lotka--Volterra), and $h = 0.01$ (Pendulum and SIR). The numerical results are listed in \Cref{tab:other_odes}. What stands out in this table is that the proposed method achieves the smallest numerical errors for all four ODE models.

\begin{table}[h]
  \centering
  \caption{Comparison of numerical errors for different methods on randomly distributed observation data}
  \label{tab:nonuniform_distributed}
  \resizebox{0.99\textwidth}{!}{
  \begin{tabular}{cccccc}
    \toprule
    \multirow{2}{*}{ Numerical methods} & Pendulum & Lotka--Volterra & Lorenz63  & SIR & Lorenz96 \\
     \cline{2-6}
    & $\delta = 0.01$  & $\delta = 1$ & $\delta =1$  &$\delta = 5 $ & $\delta = 0.1$ \\
      \hline
     Finite difference method & 9.03e-1 & 2.93e-0 & 1.75e-0 & 1.06e+1 & 8.97e-1 \\ \hline
     vRKHS method & 1.65e-2 & 1.91e-2 & 9.56e-2 & 3.20e-2 & 3.32e-2 \\
     \bottomrule
  \end{tabular}}
\end{table}

\begin{figure}[htbp]
	\centering
	\subfloat
	{\label{fig:lvfit}\includegraphics[width=1.0\textwidth]{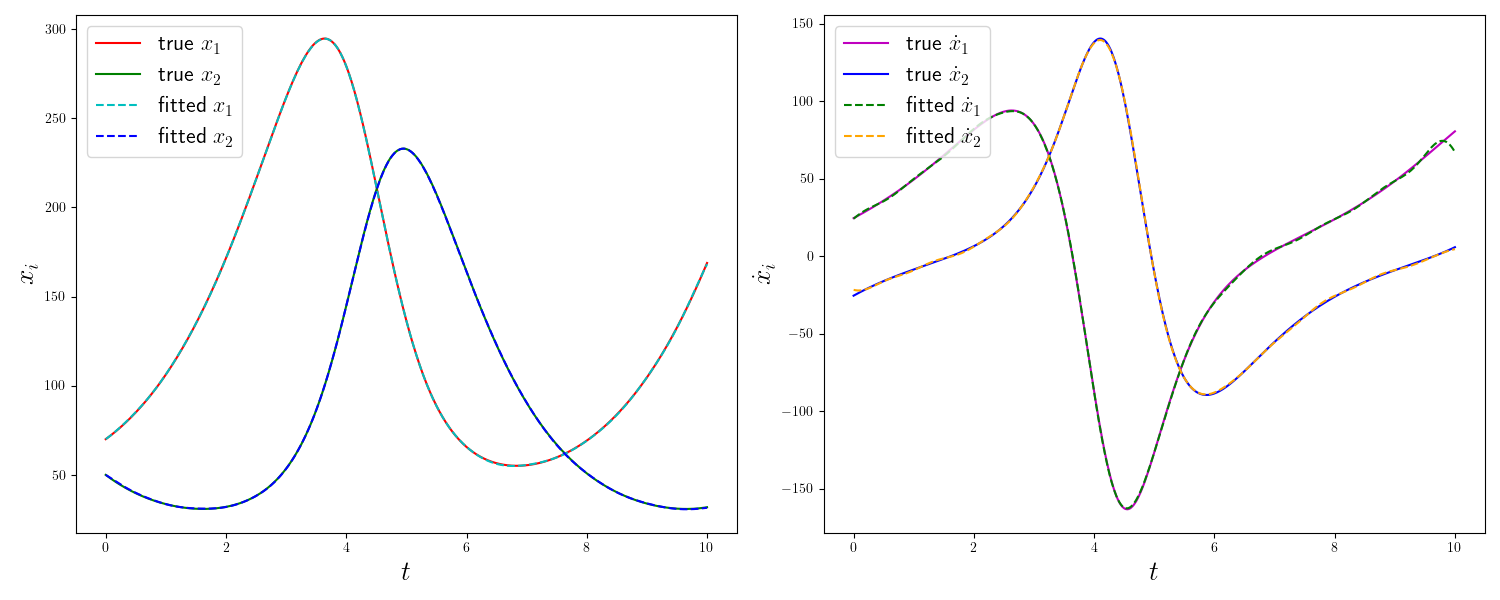}}
	\vspace{-5mm}
	\caption{True and fitted derivatives and trajectories of Lotka--Volterra system by vRKHS. The noise data are taken from 2000 random points in $[0,10]$ with noise level $\delta = 1.0$.}
	\label{fig:fit2}
\end{figure}

Another distinguished feature of the proposed method is that the observation data does not need to be equidistributed, which is frequently encountered in real applications. In contrast, the TV regularization method requires equidistributed data. In the rest of this subsection, we test the performance of the proposed methods on randomly distributed data and compare it with the finite difference method. In the test, we use the same number of observation data, but it is randomly distributed. The numerical results of the errors are reported in \Cref{tab:nonuniform_distributed}. Similar to the equidistributed observed data in \Cref{tab:other_odes}, the performance of the proposed method is consistent, with its error being much smaller than that of the finite difference method. In \Cref{fig:denoisedata_lotka}, we plot the denoised observation data for the Lotka-Volterra system. A clearer presentation is provided in \Cref{fig:fit2}, where we compare the true and fitted derivatives and trajectories. From the figure, we can clearly see that the denoised data is almost consistent with the exact data.

\subsection{Numerical results for learning dynamics}\label{sec62}
With the fitted derivative and trajectory, we can use either SINDy or \Cref{alg2:dynamics} to recover the dynamics $\bbf$. In the experiments that follow, we reconstructed the dynamics for the five examples from \cref{pendulum} to \cref{lorenz96}. The observation data used is identical to that shown in \Cref{fig:noisy_data1}. For the vRKHS method described in \Cref{alg2:dynamics}, we use the Gaussian kernel with the hyperparameter $l$ set to $1000$, $1000$, $1000$, $100$ and $100$ for the five examples, respectively.

\begin{table}[tbh] 
\caption{Accuracy of the reconstructed dynamics by SINDy and vRKHS, where $\bu$ represents the vector of parameters of the ODE. The $L^2$ norm of $\bbf$ for the five examples are taken in the regions $[0,0.4]\times[-0.4,0.4]$, $[50, 300]^2$, $[0,900]\times[10,600]\times[0,600]$, $[-20,20]\times[-20,20]\times[0,40]$, and $[-5,12]\times[-10,10]\times[-8,10]\times[-5,12]\times[-6,10]$, respectively.} 
\centering
\begin{tabular}{cccccc}
\toprule
Error & Pendulum & Lotka-Volterra & SIR & Lorenz63 & Lorenz96  \\
\toprule
$\frac{\|\bu-\bu_{\text{SINDy}\|_2}}{\|\bu\|_2}$ & \ \ -- & 1.34e-3 & \ \ -- & 8.12e-3 & 1.88e-3 \\
\hline 
$\frac{\|\bbf-\bbf_{\text{SINDy}}\|_{L^2}}{\|\bbf\|_{L^2}}$ & \ \ -- & 6.49e-4 & \ \ -- & 1.57e-2 & 6.05e-3 \\   
\hline 
$\frac{\|\bbf-\bbf_{\text{vRKHS}}\|_{L^2}}{\|\bbf\|_{L^2}}$ & 2.09e-2 & 1.22e-2 & 3.59e-1  & 3.96e-1 & 3.93e-2  \\         
\bottomrule
\end{tabular}\label{tab:dynamics}
\end{table}

The relative errors of the recovered parameters and the recovered dynamics using the SINDy and vRKHS methods are presented in \Cref{tab:dynamics}. We use $\bu$ to denote the vector of parameters for the ODEs, specifically: $\bu=(\alpha,\beta,\gamma,\delta)^{\top}$ for the Lotka-Volterra system, $\bu=(\sigma,\rho,\beta)^{\top}$ for the Lorenz63 system, and $\bu=F$ for the Lorenz96 systems. We note that although a set of basis functions can be constructed for SINDy to learn the dynamics of the nonlinear pendulum and SIR model, the true $\bbf$ is not a linear combination of basic elementary functions. Moreover, in many real-world applications, the parameterized expression of $\bbf$ is not known in advance. In such cases, kernel methods offer a more suitable approach for learning the dynamics. In the experiments, we only use the vRKHS method to recover $\bbf$ for the nonlinear pendulum system and SIR model. From \Cref{tab:dynamics}, we observe that the parameters recovered by both SINDy and vRKHS methods have good accuracy. The corresponding dynamics recovered by SINDy are more accurate than those obtained using the vRKHS method. This can be attributed to the fact that SINDy leverages more information about the dynamics, as the parameterized form of the ODE system is already known. In contrast, the vRKHS method only assumes that $\bbf$ belongs to a vRKHS, yet it is still able to recover $\bbf$ with relatively high accuracy.

\begin{figure}[htbp]
	\centering
	\subfloat[]
	{\label{fig:8a}\includegraphics[width=0.5\textwidth]{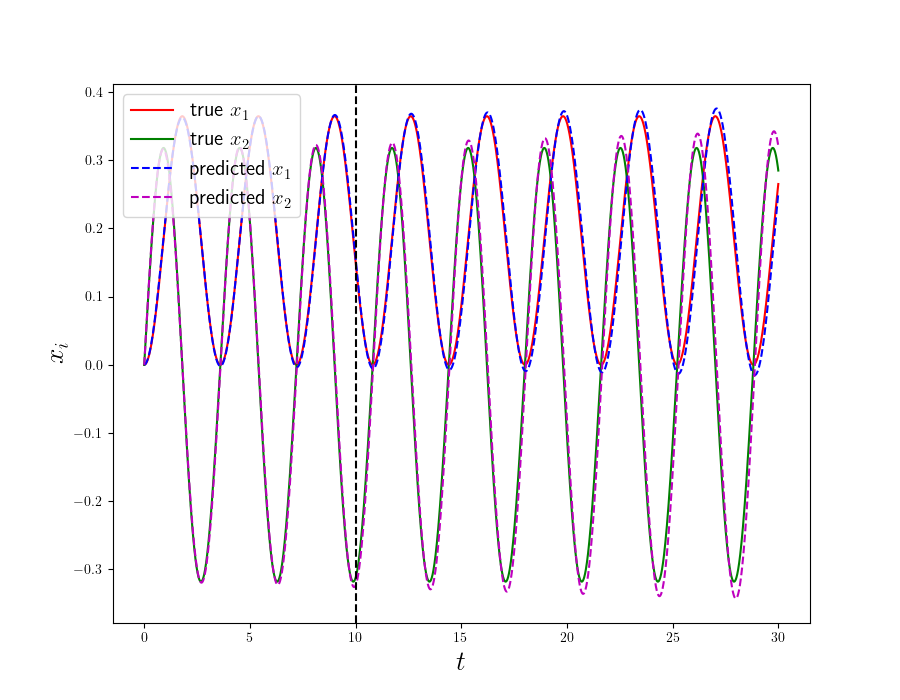}}\hspace{-2.0mm}
  \subfloat[]
  {\label{fig:8c}\includegraphics[width=0.45\textwidth]{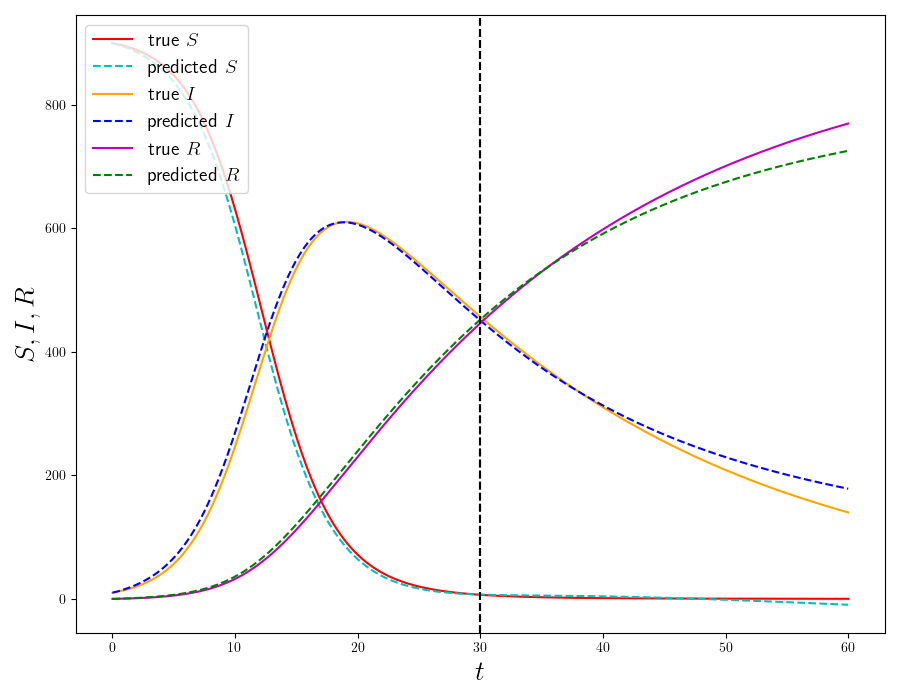}}
  \vspace{-2.0mm}
  \subfloat[]
	{\label{fig:9a}\includegraphics[width=1.0\textwidth]{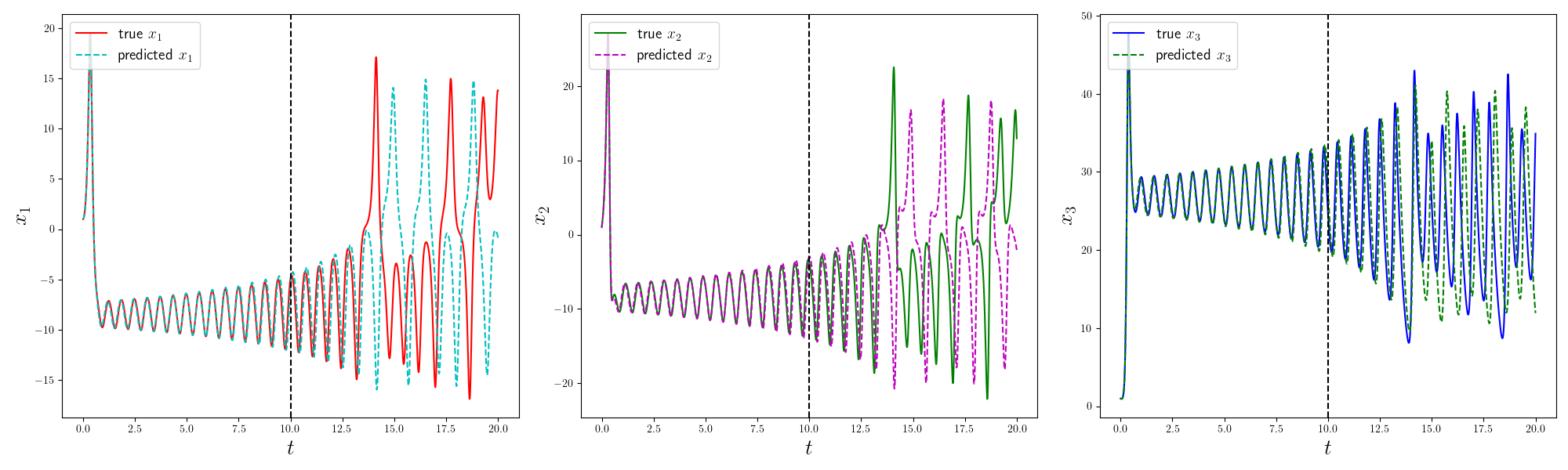}}
  \vspace{-2.0mm}
	\caption{Comparison of the true and predicted trajectories by vRKHS. (a) Forced vibration of nonlinear pendulum. (b) SIR model. (c) Lorenz63 system.}
	\label{fig:predict}
\end{figure}
After extracting the dynamics from the data, we can make predictions by solving the ODE using the recovered $\bbf$. The comparison between the true and predicted trajectories using the vRKHS method is shown in \Cref{fig:predict}, with a vertical dashed black line indicating the end time of the observation period. We present results for the nonlinear pendulum, SIR model, and Lorenz63 system; results for the other two examples are similar and are omitted for brevity. From the figures, we observe that for the nonlinear pendulum and SIR model, the recovered dynamics enable accurate predictions extending two to three times beyond the observation period. However, for the Lorenz63 system, the prediction become less accurate starting around $t \approx 14$; we remark that the result is similar for the recovered dynamics by SINDy. This discrepancy is likely due to the chaotic nature of the Lorenz63 system, as even a small perturbation in $\bbf$ can cause significant deviations in the trajectory from $t \approx 14$ onward.

\section{Conclusion}\label{sec7}
To mitigate the impact of noise on learning dynamics from data, we have proposed a method that simultaneously fits both the derivative and the trajectory from noisy time-series data. This approach treats the derivative estimation as an inverse problem involving integral operators in the forward model and estimates the derivative function by solving a vRKHS regularization problem. We have established an integral-form representer theorem for vRKHS, based on which we only need to compute the regularized solution by solving a finite-dimensional problem and can automatically select a good regularization parameter by the L-curve method. With the fitted derivative and trajectory, the dynamics can be recovered by solving a linear regularization problems by embedding the dynamics in a vRKHS. Several numerical experiments are performed to demonstrate the effectiveness and efficiency of our method.

When the data size $n$ is very large, the implementation of our method becomes challenging since $G_1$ and $\widetilde{G}_1$ are large-scale dense matrices and the eigen-decompositions are very expensive to compute. In this case, we could consider scalable RKHS methods to get sparse approximations of $G_1$ or $\widetilde{G}_1$, as referenced in \cite{quinonero2005unifying,liu2020gaussian}. Additional, iterative regularization methods that rely only on matrix-vector products are more efficient, as referenced in \cite{li2024preconditioned,li2024scalable}. These approaches will be the focus of our future research.

\section*{Acknowledgment}
This work was supported in part by the Andrew Sisson Fund and the Faculty Science Researcher Development Grant of the University of Melbourne.

\bibliographystyle{siamplain}
\bibliography{references}
\end{document}